\documentclass[11pt,reqno]{amsart}

\usepackage{amsmath,amssymb,amsthm}
\usepackage{thmtools,enumerate}
\usepackage{mathtools}

\usepackage[german,english]{babel}
\usepackage[autostyle]{csquotes}

\usepackage[all]{xy}
\usepackage{pstricks}

\usepackage{tikz-cd}
\usetikzlibrary{babel}

\usepackage{hyperref}
\hypersetup{
	colorlinks=true,
	linkcolor=red,
	citecolor=blue}

\theoremstyle{plain}

\newtheorem{thm}{Theorem}[section]
\newtheorem{prop}[thm]{Proposition}
\newtheorem{lem}[thm]{Lemma}
\newtheorem{cor}[thm]{Corollary}

\theoremstyle{definition}

\newtheorem{dfn}[thm]{Definition}
\newtheorem{rem}[thm]{Remark}
\newtheorem{exa}[thm]{Example}

\newcommand{\Z}{\mathbb{Z}}

\newcommand{\Q}{\mathbb{Q}}
\newcommand{\R}{\mathbb{R}}
\newcommand{\C}{\mathbb{C}}
\newcommand{\OO}{\mathcal{O}}

\DeclareMathOperator{\codim}{codim}
\DeclareMathOperator{\mult}{mult}

\DeclareMathOperator{\Supp}{Supp}
\DeclareMathOperator{\Sym}{Sym}

\DeclareMathOperator{\Ext}{Ext}


\begin{document}
	
	\title{Rigid currents in birational geometry}
	
	\author[V.\ Lazi\'c]{Vladimir Lazi\'c}
	\address{Fachrichtung Mathematik, Campus, Geb\"aude E2.4, Universit\"at des Saarlandes, 66123 Saarbr\"ucken, Germany}
	\email{lazic@math.uni-sb.de}
		
	\author[Z.\ Xie]{Zhixin Xie}
	\address{Institut \'Elie Cartan de Lorraine, Universit\'e de Lorraine, 54506 Nancy, France}
	\email{zhixin.xie@univ-lorraine.fr}

	\thanks{Lazi\'c gratefully acknowledges support by the Deutsche Forschungsgemeinschaft (DFG, German Research
Foundation) -- Project-ID 286237555 -- TRR 195. We thank Tien-Cuong Dinh for explaining Proposition \ref{pro:infinitelymanyminimal} and for many useful comments, to Matei Toma for pointing out Example \ref{exa:nonkahler}, and to Hsueh-Yung Lin and Thomas Peternell for valuable conversations on the topic of this paper.
		\newline
		\indent 2020 \emph{Mathematics Subject Classification}: 14E30,  32U40, 32J25.\newline
		\indent \emph{Keywords}: rigid currents, currents with minimal singularities, Minimal Model Program.
	}
	
	\begin{abstract}
		A rigid current on a compact complex manifold is a closed positive current whose cohomology class contains only one closed positive current. Rigid currents occur in complex dynamics, algebraic and differential geometry. The goals of the present paper are: (a) to give a systematic treatment of rigid currents, (b) to demonstrate how they appear within the Minimal Model Program, and (c) to give many new examples of rigid currents.
	\end{abstract}

	\maketitle
	
	\begingroup
		\hypersetup{linkcolor=black}
		\setcounter{tocdepth}{1}
		\tableofcontents
	\endgroup
	
\section{Introduction}

Rigid cohomology classes on a compact complex manifold $X$ are those which contain precisely one closed positive currents; such currents are then called \emph{rigid currents}. Rigid currents have occurred in different contexts in complex dynamics, algebraic geometry and differential geometry: we review some of these in Section \ref{sec:rigid}. Only recently have they been given a name in \cite{SSV23}, where they were investigated in the context of hyperk\"ahler manifolds.

There are several goals of the present paper. The first is to give a systematic treatment of rigid currents, especially with respect to surjective morphisms. We also analyse in detail their relationship with currents with minimal singularities. The second goal is to demonstrate how they appear within the Minimal Model Program (MMP). And the third is to give many new examples of rigid currents.

To give some context, let $(X,\Delta)$ be a projective log canonical pair of dimension $n$, and assume that we know the existence of good minimal models in dimension $n-1$. The question that motivates us is the following: assume that we know that $\kappa_\iota(X,K_X+\Delta)\geq0$;\footnote{Here, $\kappa_{\iota}$ denotes the invariant Iitaka dimension, see Section \ref{sec:prelim}.} how can we conclude that $(X,\Delta)$ has a good minimal model? When $\kappa_\iota(X,K_X+\Delta)>0$, then we now know that $(X,\Delta)$ has a good minimal model by \cite[Theorem 1.3]{Laz24}, hence the main remaining case is when $\kappa_\iota(X,K_X+\Delta)=0$. In that case, if a good minimal model of $(X,\Delta)$ exists, then $K_X+\Delta$ defines a rigid current by Lemma \ref{lem:rigidMMP}, after pulling back to a resolution of $X$. Thus, \emph{a posteriori} we know the rigidity of $K_X+\Delta$. \emph{A priori}, if one knew the rigidity of $K_X+\Delta$, there is some evidence that this fact would be an important ingredient in a proof that $(X,\Delta)$ has a good minimal model.

This defines the setup of our paper: if we are given a projective log canonical pair $(X,\Delta)$ such that $\kappa_\iota(X,K_X+\Delta)=0$, we would like to show that $K_X+\Delta$ defines a rigid current after pulling back to a resolution of $X$. We are not yet able to prove that in this generality. However, we will show in this paper that \emph{each component} of the corresponding effective divisor defines a rigid current on a sufficiently high resolution of $X$.

The following are the main results of the paper. The first deals with pairs with klt singularities.

\begin{thm}\label{thm:main1}
    Assume the existence of good models for projective log canonical pairs in dimension $n-1$.
    
    Let $(X,\Delta)$ be a projective klt pair of dimension $n$ such that $\kappa_\iota(X,K_X+\Delta )=0$, and let $D\geq0$ be the unique $\R$-divisor such that $K_X+\Delta\sim_\R D$. Let $f\colon Y\to X$ be a log resolution of $(X,\Delta+D)$. Then for each desingularisation $\pi\colon Z\to X$ which factors through $f$, each component of $\pi^*D$ defines a rigid current.
  \end{thm}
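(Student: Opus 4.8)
The plan is to reduce the statement to a single numerical invariant attached to each prime component, and then to verify that invariant using adjunction together with the hypothesis in dimension $n-1$.

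\emph{Reduction to an snc pair.} First I would pass from $X$ to the resolution $Z$ via the discrepancy formula. Since $(X,\Delta)$ is klt, on $Z$ one may write $K_Z+B\sim_\R\pi^*(K_X+\Delta)+E$, where $(Z,B)$ is a klt pair with simple normal crossing support, $B\geq0$ has coefficients $<1$, and $E\geq0$ is effective and $\pi$-exceptional. Hence $K_Z+B\sim_\R\pi^*D+E=:D'$, the support of $D'$ is snc, and $\kappa_\iota(Z,K_Z+B)=\kappa_\iota(X,K_X+\Delta)=0$ by birational invariance. Every component of $\pi^*D$ is a component of $D'$, so it suffices to prove that each prime component $G$ of $D'$ defines a rigid current. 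I would then invoke the criterion for rigid prime divisors developed earlier in the paper: a prime divisor $G$ defines a rigid current if and only if its class is exceptional in the sense of the divisorial Zariski decomposition, equivalently the minimal multiplicity satisfies $\nu(\{G\},G)=1$. Granting this, the conclusion is immediate, since for any closed positive current $T$ with $\{T\}=\{G\}$ the Siu decomposition gives $T\geq\nu(T,G)[G]\geq\nu(\{G\},G)[G]=[G]$, so $T-[G]$ is a closed positive current whose cohomology class vanishes, whence $T=[G]$. The entire problem is thus to establish $\nu(\{G\},G)=1$ for each component $G$ of $D'$.

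\emph{Rigidity in the $\R$-linear system.} The hypothesis $\kappa_\iota(Z,D')=0$ supplies the algebraic half. Writing $D'=\sum_j d_jG_j$ with $d_j>0$, for each fixed $G=G_j$ one has $d_jG\leq D'$, so by monotonicity and scale-invariance of the invariant Iitaka dimension $\kappa_\iota(Z,G)=\kappa_\iota(Z,d_jG)\leq\kappa_\iota(Z,D')=0$; as $G$ is effective this forces $\kappa_\iota(Z,G)=0$. Thus each component is rigid inside its own $\R$-linear system, being the unique effective $\R$-divisor in that system. This is exactly the $\R$-linear shadow of the rigidity one sees \emph{a posteriori} via Lemma \ref{lem:rigidMMP} when a good minimal model is available.

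\emph{From $\R$-linear to numerical rigidity — the main obstacle.} The hard point is to upgrade $\kappa_\iota(Z,G)=0$ to the genuinely numerical statement $\nu(\{G\},G)=1$: a priori $G$ could still move numerically, that is, $\{G\}$ could carry a nontrivial modified nef positive part even though its $\R$-linear system is a single point. Ruling this out is an abundance-type assertion for the class of each individual component, and this is precisely where the dimension $n-1$ hypothesis enters. I would argue by adjunction: raising the coefficient along $G$ to $1$ yields a dlt pair whose adjunction to $G$ produces a log canonical pair $(G,B_G)$ of dimension $n-1$, which has a good minimal model by hypothesis. Running a suitable $(K_Z+B)$-MMP with scaling of an ample divisor — whose steps exist and terminate because, after adding the ample class, the relevant class is big and is handled by \cite{Laz24} together with the dimension $n-1$ hypothesis — and then applying special termination governed by the good model of $(G,B_G)$, one shows that $G$ is not contained in the movable locus of its class, so the positive part of $\{G\}$ vanishes; combined with $\kappa_\iota(Z,G)=0$ this gives $\nu(\{G\},G)=1$.

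\emph{Where the difficulty lies.} The delicate feature of this last step is that it must be carried out for one component at a time, \emph{without} presupposing a good minimal model of the whole pair $(Z,B)$ in dimension $n$, which is exactly what remains unknown when $\kappa_\iota=0$. The argument therefore has to stay local along $G$ and extract only the abundance of the restricted, lower-dimensional pair $(G,B_G)$; the analytic subtlety is that $\nu(\{G\},G)$ concerns arbitrary closed positive currents rather than effective divisors, so passing between the numerical invariant and the restricted algebraic data requires the regularisation and minimal-singularities machinery recalled earlier. I expect this bridge — converting the per-component algebraic rigidity into the current-theoretic statement via the dimension $n-1$ good models — to be the crux of the proof.
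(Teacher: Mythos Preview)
Your reduction to the snc situation and your criterion $\sigma_G(\{G\})=1\Longleftrightarrow\{G\}$ rigid (which is essentially Lemma~\ref{lem:not_rigid} specialised to a prime divisor) are both fine, and you correctly isolate that the whole difficulty sits in the passage from $\kappa_\iota(Z,G)=0$ to $\sigma_G(\{G\})=1$. You are also right that the move is to raise the coefficient of $G$ to $1$ and to bring in the $(n-1)$-dimensional hypothesis. But the mechanism you propose for that step does not work as written.

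The gap is that your bridge runs in the wrong direction and ends at the wrong invariant. Running an MMP for $K_Z+B$ (or for the dlt pair with $G$ at coefficient $1$) and invoking special termination via a good model of $(G,B_G)$ gives information about the \emph{adjoint} class, not about the divisorial Zariski decomposition of $\{G\}$ itself; the sentence ``$G$ is not contained in the movable locus of its class, so the positive part of $\{G\}$ vanishes'' does not follow from anything you have set up. The paper runs the implication the other way, by contradiction: assuming $\{\Gamma\}$ is not rigid, Lemma~\ref{lem:lelong_num_notwrigid} gives $\nu(\Gamma_{\min},\Gamma)=0$ and then $\sigma_\Gamma(K_X+\Gamma+\Delta')=0$ for the plt pair obtained by raising the coefficient of $\Gamma$ to $1$. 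This is the crucial analytic-to-algebraic step you allude to but never state. With $\sigma_\Gamma=0$ one runs a $(K_X+\Gamma+\Delta')$-MMP (termination coming from \cite{LT22}, since $\kappa=0$ provides an NQC weak Zariski decomposition) to a minimal model on which $\Gamma$ survives by Lemma~\ref{lem:sigmaMMP}. The endgame is then not special termination but \cite[Corollary~1.8]{DHP13}: the $(n-1)$-dimensional hypothesis yields Nonvanishing in dimension $n-1$, hence $\Gamma_Z\notin\sB(K_Z+\Gamma_Z+\Delta'_Z)$, while $\kappa=0$ forces $\sB(K_Z+\Gamma_Z+\Delta'_Z)=\Supp D'_Z\supseteq\Gamma_Z$. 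This contradiction is what closes the argument; your sketch is missing both the $\sigma_\Gamma=0$ input and the DHP stable-base-locus output.
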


When the pairs are log canonical, we can prove a similar result, albeit under somewhat stronger assumptions.

\begin{thm}\label{thm:main2}
    Assume the Nonvanishing conjecture in dimension $n$ and the existence of good models for projective log canonical pairs in dimension $n-1$.
    
    Let $(X,\Delta)$ be a projective log canonical pair of dimension $n$ such that $\kappa_\iota(X,K_X+\Delta )=0$, and let $D\geq0$ be the unique $\R$-divisor such that $K_X+\Delta\sim_\R D$. Let $f\colon Y\to X$ be a log resolution of $(X,\Delta+D)$. Then for each desingularisation $\pi\colon Z\to X$ which factors through $f$, each component of $\pi^*D$ defines a rigid current.
  \end{thm}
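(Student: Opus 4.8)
The plan is to prove Theorem \ref{thm:main2} in close parallel to Theorem \ref{thm:main1}, with the Nonvanishing conjecture in dimension $n$ supplying exactly the input that klt singularities provide for free in the first theorem. The common strategy should be: first produce a good minimal model of $(X,\Delta)$, then invoke Lemma \ref{lem:rigidMMP} to conclude that $K_X+\Delta$ pulled back to a resolution defines a rigid current, and finally descend rigidity from the total pullback to each individual component of $\pi^*D$.

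\smallskip

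First I would reduce to the existence of a good minimal model. Since $\kappa_\iota(X,K_X+\Delta)=0$, the hypotheses give us $\R$-linear equivalence $K_X+\Delta\sim_\R D$ with $D\geq0$ unique; in particular $\kappa_\iota(X,K_X+\Delta)\geq0$, so the Nonvanishing conjecture's conclusion (existence of an effective divisor in the class) is available. The step where log canonical is harder than klt is precisely here: for klt pairs, nonvanishing in the relevant range is known unconditionally by the results that feed into \cite{Laz24}, whereas for log canonical pairs one must \emph{assume} it. Given nonvanishing together with the assumed existence of good models in dimension $n-1$, I expect a standard MMP argument---running a suitable $(K_X+\Delta)$-MMP with scaling and using that abundance in dimension $n-1$ controls the boundary behaviour---to terminate with a good minimal model $(X',\Delta')$ of $(X,\Delta)$ on which $K_{X'}+\Delta'$ is semiample. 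The delicate point is that log canonical (as opposed to klt) pairs may force the MMP to touch the non-klt locus, so one likely passes to a dlt modification or applies the induction on dimension to the non-klt centres; I anticipate this is where the argument is most technical, though the structure should mirror the proof already given for Theorem \ref{thm:main1}.

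\smallskip

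Once a good minimal model exists, Lemma \ref{lem:rigidMMP} applies verbatim: pulling $K_X+\Delta$ back to the resolution $f\colon Y\to X$ (and then to any $\pi\colon Z\to X$ factoring through $f$) yields a class whose only closed positive current is the pullback of the semiample representative, i.e.\ $\pi^*(K_X+\Delta)\sim_\R \pi^*D$ defines a rigid current on $Z$. The final step is the passage from the total transform $\pi^*D$ to each of its components. Here I would use that $f$ is a log resolution of $(X,\Delta+D)$, so $\pi^*D$ is a simple normal crossings divisor; rigidity of a sum of effective divisors with snc support should force rigidity of each summand, since a closed positive current in the class of a single component, combined with the remaining fixed components, would produce an alternative current in the rigid class $\pi^*D$, contradicting uniqueness. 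This descent is the conceptual heart shared with Theorem \ref{thm:main1} and I expect it to be handled by the same lemma on rigidity and irreducible components established in the earlier sections.

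\smallskip

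In summary, the main obstacle is not the rigidity bookkeeping but securing the good minimal model in the log canonical case: the role of the Nonvanishing conjecture is to replace the unconditional nonvanishing available for klt pairs, and the role of good models in dimension $n-1$ is to guarantee abundance along the boundary so that the MMP terminates with a semiample $K_{X'}+\Delta'$.
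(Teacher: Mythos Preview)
Your proposal contains a genuine gap that makes the argument circular relative to the paper's aims. You write that under the assumed hypotheses ``a standard MMP argument\dots [should] terminate with a good minimal model $(X',\Delta')$ of $(X,\Delta)$.'' But this is precisely the open problem that motivates the paper: as the introduction explicitly states, when $\kappa_\iota(X,K_X+\Delta)=0$ the existence of a good minimal model is \emph{not} known, even granting Nonvanishing in dimension $n$ and good models in dimension $n-1$. If one could produce a good model here, then Lemma~\ref{lem:rigidMMP} and Remark~\ref{rem:rigid_weakrigid} would indeed finish the job immediately---but the whole point of Theorems~\ref{thm:main1} and~\ref{thm:main2} is to establish weak rigidity \emph{without} first knowing that a good model exists.

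You also misread the structure of the proof of Theorem~\ref{thm:main1}: it does not construct a good model of $(X,\Delta)$. Instead it argues by contradiction, assuming some component $\Gamma$ of $D$ is not rigid, and then uses Lemma~\ref{lem:lelong_num_notwrigid} together with Lemma~\ref{lem:sigmanu} to force $\sigma_\Gamma(K_X+\Gamma+\Delta')=0$ for an auxiliary plt pair; running an MMP on \emph{that} pair (not on $(X,\Delta)$) and invoking \cite{DHP13} yields the contradiction. Correspondingly, the paper's proof of Theorem~\ref{thm:main2} does not mirror your outline at all: after reducing to the dlt case, it perturbs the boundary by subtracting $\delta\lfloor\Delta\rfloor$ to obtain a \emph{klt} pair $(X,\Delta-\delta\lfloor\Delta\rfloor)$, and the Nonvanishing conjecture in dimension $n$ is invoked precisely to guarantee that this perturbed pair still has $\kappa_\iota\geq0$ (note that $\kappa_\iota(X,K_X+\Delta)=0$ says nothing about $K_X+\Delta-\delta\lfloor\Delta\rfloor$). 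One then checks $\kappa_\iota=0$ for the perturbed pair and applies Theorem~\ref{thm:main1} directly; weak rigidity transfers back because the two effective divisors have the same support. So Nonvanishing is not used, as you suggest, to supply an effective divisor for $(X,\Delta)$---that is already given---but rather for the perturbed klt pair that enables the reduction.
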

  
In particular, in the notation of the theorems, for each component $\Gamma$ of $\pi^*D$, we have that \emph{$\Gamma$ is the only effective divisor numerically equivalent to $\Gamma$}. This is a very special property of these components: indeed, as Example \ref{exa:lehmann} shows, there exist a smooth projective variety $Z$ and a smooth prime divisor $P$ on $Z$ such that $\kappa(Z,P)=0$, but there exists another effective divisor $P'$ numerically equivalent to $P$. However, note that the rigidity of currents of integration, as proved in Theorems \ref{thm:main1} and \ref{thm:main2}, is a much stronger property than saying that the corresponding class contains only one current of integration of an effective $\R$-divisor, see Remark \ref{rem:rigidvsnumerical}.
	
\section{Preliminaries}\label{sec:prelim}

Throughout the paper we work over $\C$, and all manifolds are connected. We write $D\geq0$ for an effective $\R$-divisor $D$ on a normal variety $X$. A \emph{birational contraction} is a birational map whose inverse does not contract any divisors. If $f\colon X\to Y$ is a surjective morphism of compact complex manifolds or of normal varieties, and if $D$ is an $\R$-divisor on $X$, then $D$ is \emph{$f$-exceptional} if $\codim_Y f(\Supp D)\geq2$. We use the convention that $d^c=\frac{1}{2\pi i}(\partial-\overline{\partial})$, so that $dd^c=\frac{i}{\pi}\partial\overline{\partial}$. 

\subsection{Pairs}

The standard reference for the definitions and basic results on the singularities of pairs and the Minimal Model Program is \cite{KM98}. A \emph{pair} $(X,\Delta)$ in this paper always has a boundary $\Delta$ which is an effective $\R$-divisor. Many results on the existence of minimal models for log canonical pairs are now known, see \cite{LT22}.

\subsection{Minimal and good models}

Let $(X,\Delta)$ be a log canonical pair and let $Y$ be a normal variety. A birational contraction $f\colon X\dashrightarrow Y$ is a \emph{minimal model of $(X,\Delta)$} if $K_Y+f_*\Delta$ is $\R$-Cartier and nef, and if there exists a resolution of indeterminacies $(p,q)\colon W\to X\times Y$ of the map $f$ such that $p^*(K_X+\Delta)\sim_\R q^*(K_Y+f_*\Delta)+E$, where $E\geq0$ is a $q$-exceptional $\R$-divisor which contains the whole $q$-exceptional locus in its support. If $K_Y+f_*\Delta$ is additionally semiample, them $f$ is a \emph{good minimal model}, or simply a \emph{good model}, of $(X,\Delta)$.

\subsection{Invariant Iitaka dimension}

We use the \emph{invariant Iitaka dimension} of a pseudoeffective $\R$-Cartier $\R$-divisor $D$ on a normal projective variety $X$, denoted by $\kappa_\iota(X,D)$. It shares many of the good properties with the Iitaka dimension for $\Q$-divisors \cite[Section 2.5]{Fuj17} that will be used in this paper without explicit mention. In particular, the invariant Iitaka dimension of a $\Q$-divisor is the same as its usual Iitaka dimension by \cite[Proposition 2.5.9]{Fuj17}. If the divisor $D$ has rational coefficients or if $D\geq0$, its Iitaka dimension is denoted by $\kappa(X,D)$. 

We need two properties which will be used without explicit mention, see \cite[Section 2]{Laz24}. First, if $D$ is an $\R$-Cartier $\R$-divisor on a normal projective variety $X$, if $f\colon Y\to X$ is a birational morphism from a normal projective variety $Y$, and if $E$ is an effective $f$-exceptional divisor on $Y$, then
$$\kappa_\iota(X,D)=\kappa_\iota(Y,f^*D+E).$$
Second, if $D_1$ and $D_2$ are effective $\R$-Cartier $\R$-divisors on a normal projective variety $X$ such that $\Supp D_1=\Supp D_2$, then $\kappa_\iota(X,D_1)=\kappa_\iota(X,D_2)$.

\subsection{Positive currents}

If $X$ is a complex manifold, we denote its Bott-Chern $(1,1)$-cohomology space by $H^{1,1}_\mathrm{BC}(X,\C)$, and we denote by $H^{1,1}_\mathrm{BC}(X,\R)$ the space of its real points. If additionally $X$ is compact and K\"ahler, then $H^{1,1}_\mathrm{BC}(X,\C)$ is isomorphic to the Dolbeault cohomology group $H^{1,1}(X,\C)$.

If $T$ is a closed $(1,1)$-current on a complex manifold $X$, we denote by $\{T\}$ its class in $H^{1,1}_\mathrm{BC}(X,\C)$; if $T$ is real, then $\{T\}\in H^{1,1}_\mathrm{BC}(X,\R)$. If $T$ is a representative of a class $\alpha\in H^{1,1}_\mathrm{BC}(X,\C)$, we write $T\in\alpha$; if $T'\in\alpha$ is another representative, we also write $T\equiv T'$.

A $(1,1)$-current $T$ on $X$ is \emph{positive}, and we write $T\geq0$, if $T(\varphi)$ is a positive measure for every smooth $(n-1,n-1)$-form of type 
$$\varphi=(i\alpha_1\wedge \overline\alpha_1)\wedge \ldots\wedge(i\alpha_{n-1}\wedge \overline\alpha_{n-1}),$$
where $\alpha_i$ are $(1,0)$-forms and $n=\dim X$. A positive $(1,1)$-current is always real. If $T$ and $T'$ are two $(1,1)$-currents on $X$, we write $T\geq T'$ if $T-T'\geq0$. A cohomology class $\alpha\in H^{1,1}(X,\R)$ is \emph{pseudoeffective} if it contains a closed positive current.

If $D$ is an irreducible analytic subset of pure codimension $1$ in $X$, then we denote by $[D]$ (or simply also by $D$ if there is no danger of confusion) the \emph{current of integration} on the regular part of $D$, which is a positive closed $(1,1)$-current. If we have an $\R$-divisor $D=\delta_1D_1+\dots+\delta_rD_r$ on $X$, then we call the corresponding current $[D]:=\delta_1[D_1]+\dots+\delta_r[D_r]$ the \emph{current of integration on $D$}.

If $f\colon Y\to X$ is a surjective holomorphic map between compact complex manifolds and if $T$ is a closed positive $(1,1)$-current on $X$, then one can easily define its pullback $f^*T$ to $Y$ such that $\{f^*T\}=f^*\{T\}$, see \cite[2.2.3]{Bou04}. If $D$ is an $\R$-divisor on $X$, then $f^*[D]=[f^*D]$; this follows from the Lelong--Poincar\'e equation.
If $G$ is an $\R$-divisor on $Y$, then $f_*[G]=[f_*G]$, see for instance \cite[Proposition 4.2.78]{BM19}. This will be relevant in Remark \ref{rem:pushforward}.

\subsection{Plurisubharmonic functions}

We refer to \cite{Dem12} for the definition and general properties of \emph{plurisubharmonic} or \emph{psh} functions on a complex manifold $X$. Here we only collect several properties that we use often in this paper.

For psh functions $u$ and $v$ on a complex manifold $X$, the functions $u+v$ and $\max\{u,v\}$ are also psh. The pullback of a psh function by a holomorphic map is again psh. A closed $(1,1)$-current $T$ on $X$ is positive if and only if it can be written locally as $T=dd^c \varphi$ for a psh function $\varphi$.

Psh functions are locally bounded from above on $X$; if additionally $X$ is compact, then any psh function on $X$ is constant. A more suitable notion on compact complex manifolds is that of \emph{quasi-plurisubharmonic} or \emph{quasi-psh} functions. A function $\varphi\colon X\to[-\infty,+\infty)$ on a complex manifold $X$ is quasi-psh if it is locally equal to the sum of a psh function and of a smooth function; equivalently, $\varphi$ is quasi-psh if it is integrable and upper semicontinuous, and there exists a smooth $(1,1)$-form $\theta$ on $X$ such that $\theta+dd^c\varphi\geq 0$ in the sense of currents: then we say that $\varphi$ is \emph{$\theta$-psh}.

If $\theta$ is a smooth closed real $(1,1)$-form on $X$ and if $T$ is a closed positive $(1,1)$-current in $\{\theta\}$, then there exists a locally integrable function $\varphi$ such that $T=\theta+dd^c\varphi$. In other words, $\varphi$ is $\theta$-psh. If $X$ is additionally compact, then $\varphi$ is unique up to an additive constant.

\subsection{Lelong numbers}

We refer to \cite{Dem12} for the definition and general properties of \emph{Lelong numbers} of psh functions on a complex manifold $X$. Here we recall several properties that we use often in this paper.

For psh functions $u$ and $v$ on $X$ and for each point $x\in X$ we have
$$ \nu(u+v,x)=\nu(u,x)+\nu(v,x). $$
If $T$ is a closed positive $(1,1)$-current, then locally around a point $x\in X$ we can write $T=dd^c\varphi$ for a psh function $\varphi$, and we define the Lelong number of $T$ at $x$ as $\nu(T,x):=\nu(\varphi,x)$; this does not depend on the choice of $\varphi$. If $Y$ is an analytic subset of $X$ and if $x\in X$, then a result of Thie states that $\nu(Y,x)$ is equal to the multiplicity of $Y$ at $x$. For a closed positive $(1,1)$-current $T$ and for any analytic subset $Y$ of $X$ we may define the \emph{generic Lelong number of $T$ along $Y$} as 
$$\nu(T,Y):=\inf_{x\in Y}\nu(T,x),$$
which is equal to $\nu(T,x)$ for a very general point $x\in Y$ by a theorem of \cite{Siu74}.

If $T$ is a closed positive $(1,1)$-current on $X$, then by \cite{Siu74} there exist at most countably many codimension $1$ analytic subsets $D_k$ such that $T$ has the \emph{Siu decomposition}
$$T=\sum\nu(T,D_k)D_k+R,$$
where $R$ is a closed positive $(1,1)$-current such that $\nu(R,\Gamma)=0$ for each codimension $1$ analytic subset of $X$. In this paper we call $\sum\nu(T,D_k)D_k$ the \emph{divisorial part} and $R$ the \emph{residual part} of (the Siu decomposition of) $T$.

\subsection{Nakayama--Zariski functions and Boucksom--Zariski functions}

Let $X$ be a $\Q$-factorial projective variety and let $\Gamma$ be a prime divisor on $X$. Nakayama \cite{Nak04} defined \emph{$\sigma_\Gamma$-functions} on the pseudoeffective cone on $X$. Namely, if $D$ be a big $\R$-divisor on $X$, set
    \[\sigma_\Gamma (D) := \inf \{ \mult_\Gamma \Delta \mid 0 \leq\Delta \sim_\R D \};\]
    and if $D$ is a pseudoeffective $\R$-divisor on $X$, we pick an ample $\R$-divisor $A$ on $X$ and define
    \[\sigma_\Gamma (D) := \lim_{\varepsilon\downarrow 0} \sigma_\Gamma (D+\varepsilon A);\]
   this does not depend on the choice of $A$ and is compatible with the definition above for big divisors. Moreover, $\sigma_\Gamma(D)$ only depends on the numerical class of $D$, hence $\sigma_\Gamma$ is well-defined on the pseudoeffective cone of $X$. Nakayama originally defined these functions when $X$ is smooth, but the definition works well in the $\Q$-factorial setting, see for instance \cite[Lemma 2.12]{LX23}.
   
If $X$ is a compact complex manifold and if $\Gamma$ is an analytic prime divisor on $X$, Boucksom \cite{Bou04} defined \emph{$\nu(\cdot,\Gamma)$-functions} on the cone of pseudoeffective classes in $H^{1,1}_\mathrm{BC}(X,\R)$, and he showed that they coincide with Nakayama's $\sigma_\Gamma$-functions when one considers algebraic classes. To avoid possible confusion with Lelong numbers, we will denote these Boucksom's functions also by $\sigma_\Gamma$. Here we sketch the construction when $X$ is a compact K\"ahler manifold, which suffices for the purposes of this paper. Let $\alpha$ be a pseudoeffective class in $H^{1,1}(X,\R)$. After fixing a reference K\"ahler form $\omega$, and if $T_{\min,\varepsilon}$ is a current with minimal singularities in the class $\alpha+\varepsilon\{\omega\}$ for a positive real number $\varepsilon$ (see Section \ref{sec:minimal}), set
$$\sigma_\Gamma(\alpha):=\inf_{x\in\Gamma}\sup_{\varepsilon>0}\nu(T_{\min,\varepsilon},x);$$
this does not depend on the choice of $\omega$ and one has $\sigma_\Gamma(\alpha)=\nu(T_{\min},\Gamma)$ when $\alpha$ is a big class and $T_{\min}\in \alpha$ is a current with minimal singularities. Even though the notation is slightly different, it is easy to see that this definition is equivalent to that from \cite{Bou04}.

The following easy lemma will be crucial in this paper, see \cite[Proposition 3.6(i)]{Bou04} and the equation \eqref{eq:Tmin} below.

\begin{lem}\label{lem:sigmanu}
Let $X$ be a compact complex manifold and let $\Gamma$ be an analytic prime divisor on $X$. If $\alpha$ is a pseudoeffective class in $H^{1,1}_\mathrm{BC}(X,\R)$, then $\sigma_\Gamma(\alpha)\leq\nu(T,\Gamma)$ for every $T\in\alpha$.
\end{lem}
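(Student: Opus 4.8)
The plan is to test the current with minimal singularities against an explicit competitor built from $T$ itself. Fix the reference K\"ahler form $\omega$ appearing in the definition of $\sigma_\Gamma$, and let $T\in\alpha$ be an arbitrary closed positive $(1,1)$-current. For each $\varepsilon>0$ I would consider $S_\varepsilon:=T+\varepsilon\omega$. Since $T$ is closed and positive and $\varepsilon\omega$ is a smooth positive form, $S_\varepsilon$ is again a closed positive current, and $\{S_\varepsilon\}=\alpha+\varepsilon\{\omega\}$; thus $S_\varepsilon$ is a genuine competitor in the very class used to define $T_{\min,\varepsilon}$.

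Two elementary facts then drive the argument. First, adding the smooth form $\varepsilon\omega$ does not affect Lelong numbers, so $\nu(S_\varepsilon,x)=\nu(T,x)$ for every $x\in X$. Second, by definition $T_{\min,\varepsilon}$ is less singular than every closed positive current in $\alpha+\varepsilon\{\omega\}$, in particular than $S_\varepsilon$; writing the local potentials as $\theta$-psh functions, this means that the potential of $T_{\min,\varepsilon}$ dominates that of $S_\varepsilon$ up to a bounded error, and the monotonicity of Lelong numbers (namely, $u\ge v+O(1)$ forces $\nu(u,x)\le\nu(v,x)$) yields $\nu(T_{\min,\varepsilon},x)\le\nu(S_\varepsilon,x)=\nu(T,x)$ for all $x\in X$ and all $\varepsilon>0$. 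This is the step that invokes \cite[Proposition 3.6(i)]{Bou04} together with the equation \eqref{eq:Tmin}.

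To conclude I would take the supremum over $\varepsilon>0$, obtaining $\sup_{\varepsilon>0}\nu(T_{\min,\varepsilon},x)\le\nu(T,x)$ for each fixed $x\in\Gamma$, and then the infimum over $x\in\Gamma$, which gives $\sigma_\Gamma(\alpha)=\inf_{x\in\Gamma}\sup_{\varepsilon>0}\nu(T_{\min,\varepsilon},x)\le\inf_{x\in\Gamma}\nu(T,x)=\nu(T,\Gamma)$, the last equality being the definition of the generic Lelong number of $T$ along $\Gamma$.

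The argument is essentially formal once the comparison of Lelong numbers for currents with minimal singularities is in hand, so I do not expect a serious obstacle. The only points demanding care are the precise statement of that comparison, namely that minimal singularities force the pointwise-smallest Lelong numbers, and, should one want the statement for a general compact complex manifold rather than a K\"ahler one, replacing the explicit regularisation $T+\varepsilon\omega$ and the K\"ahler definition of $\sigma_\Gamma$ by Boucksom's general construction from \cite{Bou04}, where the same monotonicity is available.
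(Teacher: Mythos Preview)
Your proposal is correct and follows essentially the same approach as the paper: the paper does not give a detailed proof but simply points to \cite[Proposition~3.6(i)]{Bou04} and equation~\eqref{eq:Tmin}, and your argument is precisely an unpacking of those references via the K\"ahler definition of $\sigma_\Gamma$ given in the text. Your final caveat about the non-K\"ahler case is also apt, since the paper itself only sketches the K\"ahler construction and defers the general case to \cite{Bou04}.
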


The following well-known lemma relates $\sigma_\Gamma$-functions and the MMP; for the most general version, see for instance \cite[Lemma 2.14]{LX23}.

\begin{lem}\label{lem:sigmaMMP}
Let $(X,\Delta)$ be a projective $\Q$-factorial log canonical pair and let $\Gamma$ be a prime divisor such that $\sigma_\Gamma(K_X+\Delta)=0$. If $\varphi\colon (X,\Delta)\dashrightarrow(Y,\Delta_Y)$ is a minimal model of $(X,\Delta)$, then $\Gamma$ is not contracted by $\varphi$.
\end{lem}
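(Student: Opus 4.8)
The plan is to argue by contradiction. Suppose that $\Gamma$ is contracted by $\varphi$; I will deduce that $\sigma_\Gamma(K_X+\Delta)>0$, contrary to the hypothesis. Fix a common resolution as in the definition of a minimal model: a smooth projective variety $W$ with birational morphisms $p\colon W\to X$ and $q\colon W\to Y$ resolving $\varphi$, such that
$$p^*(K_X+\Delta)\sim_\R q^*(K_Y+\Delta_Y)+E,$$
where $E\geq0$ is $q$-exceptional and $\Supp E$ contains the whole $q$-exceptional locus. Let $\tilde\Gamma$ be the strict transform of $\Gamma$ on $W$ under $p$; since $\Gamma$ is not $p$-exceptional, $\tilde\Gamma$ is a prime divisor, and by the birational invariance of Nakayama's functions along strict transforms one has $\sigma_{\tilde\Gamma}(p^*(K_X+\Delta))=\sigma_\Gamma(K_X+\Delta)$. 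It therefore suffices to show that $\sigma_{\tilde\Gamma}(p^*(K_X+\Delta))>0$.

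I would first observe that the assumption that $\Gamma$ is contracted forces $\tilde\Gamma$ to be $q$-exceptional: the image $q(\tilde\Gamma)=\overline{\varphi(\Gamma)}$ has dimension at most $n-2$, so $\codim_Y q(\tilde\Gamma)\geq 2$. As $\Supp E$ contains the whole $q$-exceptional locus, we get $\tilde\Gamma\subseteq\Supp E$, i.e.\ $e:=\mult_{\tilde\Gamma}E>0$. Writing $N:=K_Y+\Delta_Y$, which is nef, the goal is thus reduced to the purely birational lower bound $\sigma_{\tilde\Gamma}(q^*N+E)\geq e$.

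This lower bound is the heart of the matter, and the step I expect to be the main obstacle, precisely because $K_X+\Delta$ need not be big (indeed $\kappa_\iota=0$ in the applications), so one cannot work with the infimum definition of $\sigma_{\tilde\Gamma}$ directly. The key idea is to perturb not by a general ample class on $W$, but by the pullback $q^*A_Y$ of an ample $\R$-divisor $A_Y$ on $Y$. For each $\delta>0$ the class $q^*(N+\delta A_Y)+E$ is big, being $E$ plus the pullback of the ample divisor $N+\delta A_Y$, so
$$\sigma_{\tilde\Gamma}(q^*(N+\delta A_Y)+E)=\inf\{\mult_{\tilde\Gamma}D'\mid 0\leq D'\equiv q^*(N+\delta A_Y)+E\}.$$
Given such a $D'$, set $B:=D'-E$. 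Then $B\equiv q^*(N+\delta A_Y)$, so $B$ is $q$-numerically trivial and in particular $-B$ is $q$-nef, while $q_*B=q_*D'\geq0$ because $E$ is $q$-exceptional. The Negativity Lemma then yields $B\geq0$, that is $D'\geq E$, whence $\mult_{\tilde\Gamma}D'\geq e$ and therefore $\sigma_{\tilde\Gamma}(q^*(N+\delta A_Y)+E)\geq e$. The point of perturbing by $q^*A_Y$ rather than by an ample class on $W$ is exactly that it keeps $B$ trivial on the fibres of $q$, so that the Negativity Lemma applies.

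Finally I would let $\delta\downarrow0$. Since $q^*A_Y$ is big, writing $q^*A_Y\equiv A_W+F_W$ with $A_W$ ample on $W$ and $F_W\geq0$, subadditivity of $\sigma_{\tilde\Gamma}$ together with its definition as a limit over ample perturbations gives $\sigma_{\tilde\Gamma}(q^*N+E)\geq\limsup_{\delta\downarrow0}\sigma_{\tilde\Gamma}(q^*N+E+\delta q^*A_Y)\geq e>0$. Combined with the reduction above, this contradicts $\sigma_\Gamma(K_X+\Delta)=0$ and shows that $\Gamma$ cannot be contracted. The only points that deserve a careful line of justification are the birational invariance of $\sigma$ along strict transforms and the interchange of $\sigma_{\tilde\Gamma}$ with the big-class perturbation $q^*A_Y$; both are standard consequences of Nakayama's theory.
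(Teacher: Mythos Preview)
Your argument is correct. Note that the paper does not actually prove this lemma: it records it as well known and refers to \cite[Lemma 2.14]{LX23} for a general version, so there is no proof in the paper to compare against. Your route via the Negativity Lemma is the standard one. In fact, the heart of your argument---the inequality $\sigma_{\tilde\Gamma}(q^*N+E)\geq\mult_{\tilde\Gamma}E$ for $N$ pseudoeffective and $E\geq0$ $q$-exceptional---is precisely \cite[Proposition III.5.7 and Lemma III.5.14]{Nak04}, which the paper itself invokes later in the proof of Corollary~\ref{cor:Tplusexceptional}; citing that directly would let you bypass the perturbation step entirely. The two points you flag at the end (birational invariance of $\sigma_\Gamma$ along strict transforms, and passing from an ample perturbation to the big class $q^*A_Y$) are indeed standard: the latter follows from lower semicontinuity of $\sigma_\Gamma$ on the pseudoeffective cone together with subadditivity and homogeneity.
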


\section{Rigid currents}\label{sec:rigid}

Let $X$ be a compact complex manifold. A class $\alpha\in H^{1,1}_\mathrm{BC}(X,\R)$ is \emph{rigid} if it contains exactly one closed positive $(1,1)$-current; thus, $\alpha$ is necessarily pseudoeffective. A closed positive $(1,1)$-current $T$ on $X$ is \emph{rigid} if its class $\{T\}\in H^{1,1}_\mathrm{BC}(X,\R)$ is rigid. Another way of expressing this is as follows: following \cite{SSV23}, for a pseudoeffective class $\alpha\in H^{1,1}_\mathrm{BC}(X,\R)$ we denote
$$\mathcal C_\alpha:=\{T\in \alpha\mid T\geq0\}.$$
The $\alpha$ is rigid if $\mathcal C_\alpha$ contains exactly one element. This notation will be particularly useful in Section \ref{sec:fibrations}.

\begin{rem}\label{rem:smallercurrent}
Let $X$ be a compact complex manifold and let $T$ be a rigid $(1,1)$-current on $X$. If $T'$ is a closed positive $(1,1)$-current on $X$ such that $T'\leq T$, then $T'$ is also rigid. Indeed, if $S\in\{T'\}$ were another closed positive $(1,1)$-current, then $S+(T-T')$ would be a closed positive $(1,1)$-current in $\{T\}$ different than $T$.
\end{rem}

Rigid currents have appeared in different contexts in the literature, but it seems they were first defined explicitly in the recent paper \cite{SSV23}. We review next some of the examples of rigid currents in different contexts.

\begin{exa}\label{exa:zero}
Let $X$ be a compact complex manifold. Then the class $\{0\}\in H^{1,1}_\mathrm{BC}(X,\R)$ is rigid: this is the first and basic, but fundamental example of rigidity of currents. Indeed, if $T\in\mathcal C_{\{0\}}$, then there exists a distribution $\varphi$ on $X$ such that $T=dd^c\varphi$. As $T\geq0$, we may identify $\varphi$ with a psh function on $X$. By compactness, $\varphi$ must be constant, hence $T=0$, as desired.
\end{exa}

\begin{exa}\label{exa:exceptional}
Let $\pi\colon Y\to X$ be a birational morphism between complex projective manifolds and let $E$ be an effective $\pi$-exceptional $\R$-divisor on $Y$. Then $E$ is rigid. We will show a more general result in Corollary \ref{cor:Tplusexceptional} below.
\end{exa}

\begin{exa}
One of the earliest examples of rigid currents occurred in complex dynamics \cite{Can01}. We refer to \cite{SSV23} for details, here we only recall the following special case of an important result from \cite{DS10}: consider an automorphism $f\colon X\to X$ of a compact K\"ahler manifold, and let $d_1(f)$ be the spectral radius of $f^*\colon H^{1,1}(X,\R)\to H^{1,1}(X,\R)$. Then we know that $d_1(f)\geq 1$ in general. If, however, $d_1(f)>1$, then it follows from \cite[Corollary 4.3.2]{DS10} that there exists a non-zero closed positive $(1,1)$-current $T$ on $X$ such that $f^*T=d_1(f)T$, and each such current $T$ is rigid.
\end{exa}

\begin{exa}
If $X$ is a K3 surface, then a nef class $\alpha\in H^{1,1}(X,\R)$ is \emph{parabolic} if $\alpha^2=0$. Then \cite[Theorem 4.3.1]{FT23} shows that each \emph{irrational} parabolic class on a projective $X$ is rigid, if the Picard rank of $X$ is at least $3$ and $X$ has no $(-2)$-curves. This was then extended in \cite[Theorem 2.1]{SSV23} to hyperk\"ahler manifolds $X$ with $b_2(X)\geq7$ (all currently known classes of hyperk\"ahler manifolds satisfy this condition) and parabolic currents on $X$, i.e.\ nef classes $\alpha\in H^{1,1}(X,\R)$ with $\alpha^{\dim X}=0$, which satisfy some additional irrationality properties.
\end{exa}

\begin{exa}
An important non-dynamical early example of rigid currents appears in \cite[Example 1.7]{DPS94}. Let $E$ be an elliptic curve and let $\mathcal E$ be a vector bundle given by the extension corresponding to a nontrivial element in $\Ext^1(\OO_E,\OO_E)\simeq H^1(E,\OO_E)\simeq\C$. Let $\sigma\colon E\to\mathbb P(\mathcal E)$ be the section of the projection $\mathbb P(\mathcal E)\to E$ corresponding to the quotient $\mathcal E\to \OO_E$, and set $C:=\sigma(E)$. Then $C$ is a nef divisor on $\mathbb P(\mathcal E)$ which is rigid as a current. In particular, this gives an example of a nef line bundle which is not hermitian semipositive.
\end{exa}

\begin{exa}\label{exa:nonkahler}
An interesting example of a rigid current on a compact non-K\"{a}hler surface is given in \cite{Tom08}. An Enoki surface $X$ is a minimal compact non-K\"ahler surface which has a unique cycle $C$ of rational curves on it in the sense of \cite[Definition 5]{Tom08}, and the image of the class $\gamma:=\{[C]\}$ under the canonical morphism $\pi\colon H^{1,1}_\mathrm{BC}(X,\R)\to H^2_\mathrm{dR}(X,\R)$ is zero. Then \cite[Theorem 10(b)]{Tom08} shows that $[C]$ is the only closed positive $d$-exact $(1,1)$-current on $X$ up to a multiplicative constant. If now $T\in\gamma$, then $T$ is $d$-exact since $\pi(\gamma)=0$, hence there exists $c\in\R$ such that $T=c[C]$. As then $c[C]\equiv[C]$, we conclude that $c=1$ by Example \ref{exa:zero}, and therefore $[C]$ is a rigid $(1,1)$-current on $X$.
\end{exa}

\begin{exa}\label{exa:lehmann}
The following example is \cite[Example 6.1]{Leh13}, which we reproduce with more details. As mentioned in the introduction, this is a possibly surprising example of a current which is not rigid: there exists a smooth projective threefold $Z$ and distinct effective divisors $P$ and $P'$ on $Z$ such that $P$ is a smooth prime divisor, $\kappa(Z,P)=0$, $\kappa(Z,P')=1$ and $P\equiv P'$. In particular, the class $\{P\}$ is not rigid.

Fix an elliptic curve $E$ and let $S:=E\times E$ with the projection morphisms $p_1$ and $p_2$. Let $F=p_1^*F'$ and $N:=p_2^*N'$, where $F'$ is a point on $E$ and $N'$ is a degree-zero non-torsion divisor on $E$. We first claim that for all $f\in\Z$ and $n\in\Z\setminus\{0\}$ we have
\begin{equation}\label{eq:infty}
\kappa(S,fF+nN)=-\infty.
\end{equation}
Indeed, assume that there exist integers $f$ and $n\neq0$ and an effective $\Q$-divisor $G$ on $S$ such that $G\sim_\Q fF+nN$, and let $F_1$ be a fibre of $p_1$ which is not contained in the support of $G+F$. Then
$$0\leq\kappa(F_1, G|_{F_1}) = \kappa(F_1, nN |_{F_1})=\kappa(E,nN')=-\infty,$$
a contradiction which shows \eqref{eq:infty}.

Now, set $\mathcal E:=\OO_S\oplus\OO_S(F+N)$ and $Z:=\mathbb{P}(\mathcal E)$ with the projection morphism $\pi\colon Z \to S$. Let $\sigma\colon S\to X$ be the section of $\pi$ corresponding to the quotient $\mathcal E\twoheadrightarrow\OO_S(F+N)$, and set $P:=\sigma(S)$. Then $P$ is a smooth prime divisor on $X$ and it is well-known that $\OO_X(P)\simeq\OO_X(1)$, see for instance the proof of \cite[Proposition V.2.6]{Har77}. Thus, by \eqref{eq:infty}, for any $m\geq 1$ we have
$$ h^0\big(Z,\OO_Z(mP)\big)=h^0\big(S,\Sym^m\mathcal E\big)=\sum_{k=0}^mh^0\big(S,\OO_S(k(F+N))\big)=1, $$
hence $\kappa(Z,P)=0$. Define $P_1:=P-\pi^*N$; then clearly $P\equiv P_1$ and by \eqref{eq:infty}, for any $m\geq 1$ we have
\begin{align*}
h^0\big(Z,\OO_Z(mP_1)\big)&=h^0\big(S,\Sym^m\mathcal E\otimes\OO_S({-}mN)\big)\\
&=\sum_{k=0}^mh^0\big(S,\OO_S(kF+(k-m)N)\big)=h^0\big(S,\OO_S(mF)\big).
\end{align*}
Therefore, $\kappa(Z,P_1)=\kappa(Z,F)=1$ and $h^0(Z,\OO_Z(P_1))>0$, and we choose $P'$ to be any effective divisor such that $P'\sim P_1$.
\end{exa}

An important example of rigid currents in the context of this paper comes from the MMP. 

\begin{lem}\label{lem:rigidMMP}
Let $(X,\Delta)$ be a projective log canonical pair which has a good model and such that $\kappa_\iota(X,K_X+\Delta)=0$. Then for each birational morphism $\pi\colon Y\to X$ such that $Y$ is smooth, the class $\{\pi^*(K_X+\Delta)\}$ is rigid.
\end{lem}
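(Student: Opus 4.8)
The plan is to reduce the rigidity of $\{\pi^*(K_X+\Delta)\}$ to the basic rigidity of the zero class (Example \ref{exa:zero}), using the fact that a good model converts $K_X+\Delta$ into a semiample divisor whose associated class, in the case $\kappa_\iota=0$, is effectively trivial. Let me write $\alpha:=\{\pi^*(K_X+\Delta)\}\in H^{1,1}_\mathrm{BC}(Y,\R)$.

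First I would set up the geometry. Let $\varphi\colon (X,\Delta)\dashrightarrow(W,\Delta_W)$ be a good model, so that $K_W+\Delta_W$ is semiample, and let $(p,q)\colon V\to X\times W$ be a resolution of indeterminacies with $V$ smooth, so that $p^*(K_X+\Delta)\sim_\R q^*(K_W+\Delta_W)+E$ for some effective $q$-exceptional $\R$-divisor $E$. Since $\kappa_\iota(X,K_X+\Delta)=0$ and invariant Iitaka dimension is a birational invariant in the sense recorded in the preliminaries, we have $\kappa_\iota(W,K_W+\Delta_W)=0$; as $K_W+\Delta_W$ is semiample, this forces $K_W+\Delta_W\sim_\R 0$, whence $p^*(K_X+\Delta)\sim_\R E$. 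After passing to a common resolution dominating both $Y$ and $V$ (using that $\kappa_\iota$ is unchanged under pullback plus an effective exceptional divisor, and that adding such a divisor does not affect which cohomology class we are analysing up to the current-of-integration bookkeeping), I may assume $\alpha=\{[E]\}$ with $E$ effective and $q$-exceptional. The key point is now that $E$ is $q$-exceptional for the morphism $q$ to the good model $W$.

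The heart of the argument is then to show that such an $\alpha$ is rigid. I would invoke the structure that is already announced in the excerpt: $q$-exceptional effective divisors give rigid currents (the content of Example \ref{exa:exceptional} and the promised Corollary \ref{cor:Tplusexceptional}). Concretely, suppose $T\in\alpha$ is any closed positive current. Pushing forward by $q$, one has $q_*T\equiv q_*[E]=[q_*E]=0$ since $E$ is $q$-exceptional, so $q_*T$ is a closed positive current in the zero class on $W$, hence $q_*T=0$ by Example \ref{exa:zero}. Because $T\geq0$ and its pushforward vanishes, $T$ must be supported on the $q$-exceptional locus, and comparing generic Lelong numbers of $T$ with those of $[E]$ along each exceptional prime divisor (using Lemma \ref{lem:sigmanu} and $\sigma_\Gamma(\alpha)=\mult_\Gamma E$, which holds for exceptional classes) pins down $T=[E]$ exactly.

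The main obstacle I anticipate is the last step: controlling a positive current $T$ supported entirely on the exceptional locus and proving it must coincide with the explicit divisorial current $[E]$, rather than merely being cohomologous to it. This requires knowing that an exceptional class contains no positive current with a nontrivial residual part and that the divisorial coefficients are rigidly determined — precisely the assertion of Corollary \ref{cor:Tplusexceptional}, which I am taking as the technical engine. A secondary subtlety is keeping the bookkeeping of $\R$-linear equivalence versus Bott–Chern cohomology classes consistent across the several blow-ups needed to realise $Y$, $V$, and $W$ on a common model; here I would lean on the Lelong–Poincaré equation $f^*[D]=[f^*D]$ and the pushforward formula $f_*[G]=[f_*G]$ recorded in the preliminaries to move currents between models without losing positivity.
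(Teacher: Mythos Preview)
Your proposal is correct and follows essentially the same route as the paper: pass to a resolution dominating both $Y$ and a resolution of indeterminacies of the good model, use semiampleness together with $\kappa_\iota=0$ to get $K_{X'}+\Delta'\sim_\R 0$ and hence $p^*(K_X+\Delta)\sim_\R E$ with $E$ effective and $q$-exceptional, then invoke Example~\ref{exa:exceptional}/Corollary~\ref{cor:Tplusexceptional} for the rigidity of $\{[E]\}$ and Corollary~\ref{cor:pullback_rigid} to descend back to $Y$. Your pushforward sketch of why exceptional divisors are rigid is a slight detour (the paper's Corollary~\ref{cor:Tplusexceptional} works directly via Nakayama's $\sigma$-functions without needing the support-theorem step), and you should note that the target of $q$ need not be smooth, so one must interpose a resolution of the good model before applying Example~\ref{exa:exceptional}; but these are minor and the overall argument is the paper's.
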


\begin{proof}
Fix a birational morphism $\pi\colon Y\to X$ such that $Y$ is smooth. Let $\varphi\colon (X,\Delta)\dashrightarrow (X',\Delta')$ be a birational contraction to a good model $(X',\Delta')$ of $(X,\Delta)$, and let $(p,q)\colon W\to X\times X'$ be a smooth resolution of indeterminacies of $\varphi$; we may assume that $p$ factors through $\pi$ and let $f\colon W\to Y$ be the induced map. Then by the Negativity lemma \cite[Lemma 3.39]{KM98} there exists an effective $q$-exceptional $\R$-divisor $E$ on $W$ such that
\begin{equation}\label{eq:8}
p^*(K_X+\Delta)\sim_\R q^*(K_{X'}+\Delta')+E.
\end{equation}
Since $\kappa_\iota(X',K_{X'}+\Delta')=0$ and $K_{X'}+\Delta'$ is semiample, we have $K_{X'}+\Delta'\sim_\R 0$, and hence $\{p^*(K_X+\Delta)\}=\{E\}$ by \eqref{eq:8}. Therefore, the class $\{p^*(K_X+\Delta)\}$ is rigid by Example \ref{exa:exceptional}. Since $p^*(K_X+\Delta)=f^*\pi^*(K_X+\Delta)$, we conclude by Corollary \ref{cor:pullback_rigid} below.
\end{proof}

\section{Rigid currents and fibrations}\label{sec:fibrations}

The following result was proved in \cite[Proposition 1.2.7]{Bou02}, and we include the proof for the benefit of the reader.

\begin{prop}\label{pro:pullback}
Let $f\colon X\to Y$ be a surjective morphism with connected fibres between compact complex manifolds, and let $\alpha\in H^{1,1}_\mathrm{BC}(Y,\R)$ be a pseudoeffective class. Then the pullback map
$$f^*\colon \mathcal C_\alpha\to\mathcal C_{f^*\alpha}$$
is bijective.
\end{prop}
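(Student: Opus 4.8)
The plan is to show that the pullback map $f^*\colon \mathcal C_\alpha\to\mathcal C_{f^*\alpha}$ is both injective and surjective. Injectivity should be the easy direction: since $f$ is surjective with connected fibres, pullback on Bott--Chern cohomology and on currents is injective, so if $f^*T_1=f^*T_2$ for positive currents $T_1,T_2\in\alpha$, then $T_1=T_2$. More concretely, writing $T_i=\theta+dd^c\varphi_i$ for a fixed smooth representative $\theta\in\alpha$, we have $f^*T_i=f^*\theta+dd^c(\varphi_i\circ f)$, so $f^*T_1=f^*T_2$ forces $dd^c((\varphi_1-\varphi_2)\circ f)=0$; since $(\varphi_1-\varphi_2)\circ f$ is constant on fibres (which are connected) and pluriharmonic where defined, one recovers $\varphi_1=\varphi_2$ up to the ambiguity already quotiented out in defining currents, giving $T_1=T_2$.

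The substantive direction is surjectivity: given a closed positive current $S\in f^*\alpha$, I must produce a closed positive current $T\in\alpha$ with $f^*T=S$, i.e.\ I must show that $S$ descends along $f$. The natural approach is to work with potentials. Fix a smooth closed real $(1,1)$-form $\theta$ with $\{\theta\}=\alpha$; then $f^*\theta$ is a smooth representative of $f^*\alpha$, and by the potential theory recalled in the excerpt there is an $(f^*\theta)$-psh function $\psi$ on $X$, unique up to a constant, with $S=f^*\theta+dd^c\psi$. The goal is to show that $\psi$ is constant along the fibres of $f$, so that it descends to an $\theta$-psh function $\varphi$ on $Y$ with $\psi=\varphi\circ f$; then $T:=\theta+dd^c\varphi$ is the required preimage, and positivity of $T$ follows because $f^*T=S\geq0$ and $f^*$ reflects positivity on the dense open locus where $f$ is a submersion.

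To prove that $\psi$ is fibrewise constant, I would restrict to a general fibre $X_y=f^{-1}(y)$, which is a compact complex manifold. The restriction $(f^*\theta)|_{X_y}$ is exact (indeed $f^*\theta$ pulls back a form on $Y$, so it restricts to $0$ in cohomology on each fibre), hence $\psi|_{X_y}$ is a psh function on the compact manifold $X_y$ plus the restriction of a potential for an exact form; by the maximum principle for psh functions on compact manifolds recalled in the Plurisubharmonic functions subsection, $\psi|_{X_y}$ is constant. Running this over all general $y$ and using connectedness of the fibres together with upper semicontinuity of $\psi$ to handle the non-general fibres, one concludes that $\psi$ is constant on every connected fibre, hence descends.

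The main obstacle is the descent step done rigorously: one must check that the fibrewise-constant function $\psi$ genuinely descends to a \emph{quasi-psh} (in fact $\theta$-psh) function $\varphi$ on $Y$, rather than merely a function, and that $dd^c\varphi=f_*\bigl(dd^c\psi\bigr)$ in the appropriate sense so that $T=\theta+dd^c\varphi$ lands in the class $\alpha$ and satisfies $f^*T=S$. This requires care where $f$ fails to be a submersion and where $\psi$ takes the value $-\infty$; the clean way is to define $\varphi$ on the locus of regular values, verify it is $\theta$-psh there via the local writing $\psi=\varphi\circ f$, and then extend $\varphi$ across the discriminant using that $\theta$-psh functions extend across analytic subsets of codimension at least one. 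Verifying $f^*T=S$ and $\{T\}=\alpha$ is then routine from $f^*(\theta+dd^c\varphi)=f^*\theta+dd^c(\varphi\circ f)=f^*\theta+dd^c\psi=S$.
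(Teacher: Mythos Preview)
Your proposal is correct and follows essentially the same route as the paper: both fix a smooth $\theta\in\alpha$, write the current upstairs as $f^*\theta+dd^c\psi$, show $\psi$ is constant on fibres, descend to a $\theta$-psh function on the regular-value locus, and then extend across the discriminant via the standard extension theorem for quasi-psh functions bounded above. The only minor difference is that the paper proves fibrewise constancy on \emph{all} fibres at once by choosing a local $dd^c$-potential $u$ for $\theta$ (so that $f^*u+\psi$ is psh on $f^{-1}(U)$ and hence constant on each compact connected fibre), thereby avoiding your separate treatment of non-general fibres.
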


\begin{proof}
Let $C\subseteq Y$ be the set of critical values of $f$, and set $Y':=Y\setminus C$ and $X':=f^{-1}(Y')$. Then the map $f|_{X'}\colon X'\to Y'$ is a holomorphic submersion. We proceed in two steps.

\medskip

\emph{Step 1.}
In this step we show that the map $f^*$ is surjective. 

Fix a smooth form $\theta\in \alpha$. If $T\in\mathcal C_{f^*\alpha}$, then there exists a quasi-psh function $\varphi$ on $X$ such that $T=f^*\theta+dd^c\varphi$. We first claim that
$$\text{the function }\varphi|_{f^{-1}(y)}\text{ is constant for every }y\in Y.$$
Indeed, fix an open subset $U\subseteq Y$ such that there exists a psh function $u$ on $U$ containing the point $y$ with $\theta|_U=dd^c u$. Then $T|_{f^{-1}(U)}=dd^c(f^*u+\varphi|_{f^{-1}(U)})$, hence the function $f^*u+\varphi|_{f^{-1}(U)}$ is psh on $f^{-1}(U)$. Since $f^{-1}(y)$ is compact, $f^*u+\varphi|_{f^{-1}(U)}$ must be constant on $f^{-1}(y)$. As $f^*u$ is constant on $f^{-1}(y)$, the claim follows.

For each $y\in Y'$ set
$$\psi(y):=\varphi(x)\quad\text{for some }x\in f^{-1}(y);$$
this is well defined by the claim above. We next claim that $\psi$ is a $\theta|_{Y'}$-psh function on $Y'$. Indeed, for each $y\in Y'$ fix an open subset $y\in U\subseteq Y'$ such that there exists a psh function $u$ on $U$ with $\theta|_U=dd^c u$. By shrinking $U$ if necessary and by the constant rank theorem we may assume that there exists an open subset $F\subseteq f^{-1}(y)$ and local coordinates $(z_1,\dots,z_n)$ on an open subset $V\subseteq f^{-1}(U)$ in $X$ such that $V\simeq U\times F$ and in those coordinates, the map $f|_V$ is just the projection $(z_1,\dots,z_n)\mapsto (z_1,\dots,z_m)$, where $n=\dim X$ and $m=\dim Y$. Then for each point $x\in F$, the set $U\times\{x\}$, viewed as a subset of $V$, is isomorphic to $U$ and the function $(u+\psi)|_U$ corresponds to $(f^*u+\varphi)|_{U\times\{x\}}$ under that isomorphism, by the definition of $\psi$. Since $(f^*u+\varphi)|_{U\times\{x\}}$ is a psh function as in the previous paragraph, the function $(u+\psi)|_U$ is also psh. This gives the claim.

Therefore, $\psi$ is a $\theta|_{Y'}$-psh function on $Y'$, and it is clearly bounded from above near $C$, since $\varphi$ is bounded from above on the whole $X$, as $X$ is compact. Therefore, $\psi$ extends uniquely to a $\theta$-psh function on $X$ by \cite[Theorem I.5.24]{Dem12}, which we also denote by $\psi$. Set $S:=\theta+dd^c\psi$; this is then a positive $(1,1)$-current on $Y$. Since $T$ and $f^*S$ coincide on $X'$, they must coincide on the whole $X$ again by \cite[Theorem I.5.24]{Dem12}, and the surjectivity of $f^*$ is proved.

\medskip

\emph{Step 2.}
In this step we show that the map $f^*$ is injective.

Assume we have two currents $T_1$ and $T_2$ in $\mathcal C_\alpha$ such that $f^*T_1=f^*T_2$, and fix a smooth form $\theta\in \alpha$. Then there exist quasi-psh functions $\psi_1$ and $\psi_2$ on $Y$ such that $T_1=\theta+dd^c\psi_1$ and $T_2=\theta+dd^c\psi_2$. Consequently, we have
$$dd^c f^*(\psi_1-\psi_2)=0.$$
Then as in the last two paragraphs of Step 1 this implies that the functions $\psi_1-\psi_2$ and $\psi_2-\psi_1$ are psh on $Y$, hence $dd^c\psi_1=dd^c\psi_2$. This shows that $T_1=T_2$, which was to be proved.
\end{proof}

An immediate corollary is the following.

\begin{cor}\label{cor:pullback_rigid}
    Let $\pi\colon Y\to X$ be a surjective morphism with connected fibres between compact complex manifolds and let $\alpha\in H^{1,1}_\mathrm{BC}(X,\R)$ be a pseudoeffective class. Then $\alpha$ is rigid if and only if $\pi^*\alpha$ is rigid.
\end{cor}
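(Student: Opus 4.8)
The plan is to deduce Corollary \ref{cor:pullback_rigid} directly from Proposition \ref{pro:pullback}, whose content is exactly that the pullback map $\pi^*\colon\mathcal C_\alpha\to\mathcal C_{\pi^*\alpha}$ is a bijection. Recall that by the definition given at the start of Section \ref{sec:rigid}, a pseudoeffective class $\beta$ is rigid precisely when the set $\mathcal C_\beta$ of closed positive $(1,1)$-currents in $\beta$ is a singleton. So the statement ``$\alpha$ is rigid if and only if $\pi^*\alpha$ is rigid'' is nothing but ``$\mathcal C_\alpha$ has exactly one element if and only if $\mathcal C_{\pi^*\alpha}$ has exactly one element.''

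First I would observe that a bijection between two sets preserves cardinality, and in particular one of the two sets is a singleton if and only if the other is. Since Proposition \ref{pro:pullback} provides such a bijection $\pi^*\colon\mathcal C_\alpha\to\mathcal C_{\pi^*\alpha}$, we immediately get $\#\mathcal C_\alpha=\#\mathcal C_{\pi^*\alpha}$, and hence $\#\mathcal C_\alpha=1$ if and only if $\#\mathcal C_{\pi^*\alpha}=1$. Translating back through the definition of rigidity yields the claim. It is worth noting that the hypotheses of Corollary \ref{cor:pullback_rigid} (surjective morphism with connected fibres between compact complex manifolds, $\alpha$ pseudoeffective) are verbatim those of Proposition \ref{pro:pullback}, so the proposition applies without any additional work, and one must only check that $\pi^*\alpha$ is again pseudoeffective, which holds because $\mathcal C_\alpha$ is nonempty (as $\alpha$ is pseudoeffective) and $\pi^*$ carries a positive current in $\alpha$ to a positive current in $\pi^*\alpha$.

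There is essentially no obstacle here; the proof is a one-line consequence of the bijectivity established in the preceding proposition, and the only care needed is to make the logical equivalence between ``singleton set'' and ``rigid class'' explicit. The entire content of the corollary has already been carried by Proposition \ref{pro:pullback}, so the corollary simply records the rigidity reformulation of that bijection.
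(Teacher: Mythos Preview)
Your proposal is correct and matches the paper's approach exactly: the paper states Corollary \ref{cor:pullback_rigid} as an immediate consequence of Proposition \ref{pro:pullback} with no further proof, and your argument spells out precisely the one-line deduction (bijection of $\mathcal C_\alpha$ with $\mathcal C_{\pi^*\alpha}$ preserves being a singleton) that the paper leaves implicit.
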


\begin{rem}\label{rem:pushforward}
    If $\pi\colon Y\to X$ is a surjective morphism between compact complex manifolds and if $\alpha\in H^{1,1}_\mathrm{BC}(Y,\R)$ is a rigid class, it is in general \emph{not} true that $\pi_*\alpha$ is rigid, even if $\pi$ is bimeromorphic. Indeed, let $X=\mathbb{P}^1\times\mathbb{P}^1$ and fix a fibre $G\subseteq X$ of the first projection to $\mathbb P^1$. Then $G$ is clearly not rigid as $\kappa(X,G)=1$. Now consider the blowup $\pi\colon Y\to X$ of $X$ at a point contained in $G$. Then as $(\pi^*G)^2=0$, we easily calculate that $\pi^*G$ is the sum of two $(-1)$-curves, and let $G'$ be the strict transform of $G$ on $Y$. Since $G'$ can be contracted to a smooth projective surface by Castelnuovo's criterion, it is rigid by Example \ref{exa:exceptional}, but $G=\pi_*G'$ is not.
\end{rem}

We have the following generalisation of Example \ref{exa:exceptional}.

\begin{cor}\label{cor:Tplusexceptional}
    Let $\pi\colon Y\to X$ be a surjective morphism with connected fibres between complex projective manifolds, let $D$ be a pseudoeffective $\R$-divisor on $X$ and let $E$ be an effective $\pi$-exceptional $\R$-divisor on $Y$. Then the map
   	$$f\colon \mathcal C_{\{D\}}\to\mathcal C_{\{\pi^*D+E\}},\quad T\mapsto\pi^*T+E$$
   	is an isomorphism. In particular, $\{D\}$ is rigid if and only if $\{\pi^*D+E\}$ is rigid.
\end{cor}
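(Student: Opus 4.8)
The plan is to combine the pullback bijection from Proposition \ref{pro:pullback} with the basic rigidity of exceptional divisors (Example \ref{exa:exceptional}) via the translation trick from Remark \ref{rem:smallercurrent}. The statement asserts that $f(T) := \pi^*T + E$ gives an isomorphism of the sets of positive currents $\mathcal C_{\{D\}}$ and $\mathcal C_{\{\pi^*D + E\}}$; so the real content is to show $f$ is well-defined, injective, and \emph{surjective}. Well-definedness is immediate: if $T \in \mathcal C_{\{D\}}$, then $\pi^*T \geq 0$ is a closed positive current in $\{\pi^*D\}$, and adding the positive current $[E]$ keeps it positive and puts it in $\{\pi^*D + E\}$, using that $\{\pi^*T\} = \pi^*\{T\} = \{\pi^*D\}$. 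Injectivity follows from the injectivity half of Proposition \ref{pro:pullback}: if $\pi^*T_1 + E = \pi^*T_2 + E$ then $\pi^*T_1 = \pi^*T_2$, whence $T_1 = T_2$.

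The main obstacle is \textbf{surjectivity}, and this is where the exceptionality of $E$ must be used crucially. First I would establish that every current $S \in \mathcal C_{\{\pi^*D + E\}}$ satisfies $S \geq [E]$, i.e.\ that $S - [E]$ is still a positive current. This is the heart of the argument: I would look at the Siu decomposition of $S$ and argue that the generic Lelong number of $S$ along each component $\Gamma$ of $E$ is at least its coefficient in $E$. The natural tool is Lemma \ref{lem:sigmanu} together with the behaviour of the Boucksom--Zariski $\sigma_\Gamma$-functions: since $\{S\} = \{\pi^*D + E\}$ and $E$ is $\pi$-exceptional, I would show $\sigma_\Gamma(\{\pi^*D+E\})$ equals the coefficient of $\Gamma$ in $E$ whenever $\Gamma$ is a component of $E$ (the $\pi^*D$ part contributes nothing to Lelong numbers along $\pi$-exceptional divisors, because such numbers can be computed on the generic fibre of $\pi$ over the point below $\Gamma$, which is positive-dimensional, while $\pi^*D$ restricts to a numerically trivial class there). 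Combined with Lemma \ref{lem:sigmanu} this gives $\nu(S,\Gamma) \geq \mathrm{mult}_\Gamma E$ for every such $\Gamma$, so the divisorial part of $S$ dominates $[E]$ and hence $S - [E] \geq 0$.

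Once $S - [E] \geq 0$ is known, I would set $S' := S - [E]$, a closed positive current in the class $\{\pi^*D\} = \pi^*\{D\}$, so $S' \in \mathcal C_{\pi^*\{D\}} = \mathcal C_{\{\pi^*D\}}$. By the surjectivity half of Proposition \ref{pro:pullback} there exists $T \in \mathcal C_{\{D\}}$ with $\pi^*T = S'$, and then $f(T) = \pi^*T + E = S$, proving surjectivity. The final ``in particular'' clause is then formal: an isomorphism of the sets $\mathcal C_{\{D\}} \cong \mathcal C_{\{\pi^*D+E\}}$ means one is a singleton if and only if the other is, which is precisely the equivalence of rigidity of $\{D\}$ and of $\{\pi^*D+E\}$.

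I expect the delicate point to be the clean justification that $\sigma_\Gamma(\{\pi^*D + E\}) = \mathrm{mult}_\Gamma E$, or equivalently that $\nu(S,\Gamma) \geq \mathrm{mult}_\Gamma E$ for all $S$ in the class, along each $\pi$-exceptional component $\Gamma$ of $E$. The subtlety is separating the contribution of the pulled-back part $\pi^*D$ (which should contribute $0$ along exceptional divisors) from that of $E$; an alternative, possibly cleaner route would be to avoid $\sigma_\Gamma$ entirely and argue directly that any closed positive current in $\pi^*\{D\} + \{E\}$ must absorb all of $[E]$, by restricting to a general fibre of $\pi$ over the image of $\Gamma$ and invoking the rigidity statement of Example \ref{exa:exceptional} fibrewise. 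Either way, the geometric input is that $E$ is $\pi$-exceptional, so its support sits over a subvariety of codimension at least $2$, forcing every positive current in the class to contain $[E]$ in its divisorial part.
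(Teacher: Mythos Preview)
Your overall strategy is exactly the paper's: injectivity via Proposition~\ref{pro:pullback}, and surjectivity by showing that any $S\in\mathcal C_{\{\pi^*D+E\}}$ satisfies $\nu(S,\Gamma)\geq\mult_\Gamma E$ for each component $\Gamma$ of $E$ (via Lemma~\ref{lem:sigmanu} and the $\sigma_\Gamma$-functions), so that $S-[E]\geq0$ lies in $\mathcal C_{\{\pi^*D\}}$ and descends by Proposition~\ref{pro:pullback}.

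The one place where your argument is not yet a proof is precisely the point you flag as ``delicate'': your heuristic that $\pi^*D$ contributes nothing to $\sigma_\Gamma$ because one can ``restrict to the generic fibre of $\pi$ over the point below $\Gamma$'' does not quite work as stated (the image $\pi(\Gamma)$ is not a point, and numerical triviality of $\pi^*D$ on fibres does not by itself control $\sigma_\Gamma$). The paper sidesteps this entirely by invoking Nakayama's algebraic results \cite[Proposition~III.5.7 and Lemma~III.5.14]{Nak04}, which give directly $\sigma_\Gamma(\pi^*D+E)\geq\mult_\Gamma E$ for $\pi$-exceptional $\Gamma$ and pseudoeffective $D$; this is why the statement is formulated for \emph{projective} manifolds and $\R$-divisors $D$ rather than arbitrary pseudoeffective classes (cf.\ the remark following the corollary). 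Note also that only the inequality $\geq$ is needed, not the equality you aim for.
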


\begin{proof}
Assume $S_1$ and $S_2$ are two currents in $\{D\}$ such that $f(S_1)=f(S_2)$. Then $\pi^*S_1=\pi^*S_2$, and thus $S_1=S_2$ by Proposition \ref{pro:pullback}. This shows that $f$ is injective.

To show the surjectivity of $f$, let $T\in\mathcal C_{\{\pi^*D+E\}}$. Then for each component $\Gamma$ of $E$ we have
$$\nu(T,\Gamma)\geq\sigma_\Gamma(\pi^*D+E)\geq \mult_\Gamma E$$
by Lemma \ref{lem:sigmanu} and by \cite[Proposition III.5.7 and Lemma III.5.14]{Nak04}. Consequently, the current $T-E\in\{\pi^*D\}$ is positive by considering the Siu decomposition of $T$, hence by Proposition \ref{pro:pullback} there exists $S\in\mathcal C_{\{D\}}$ such that $T-E=\pi^*S$. Therefore, we have $T=f(S)$, as desired.
\end{proof}

\begin{rem}
It is reasonable to expect that Corollary \ref{cor:Tplusexceptional} holds also when $D$ is a closed positive $(1,1)$-current, but the proof of surjectivity above uses \cite{Nak04} crucially, which works only in the setting of algebraic classes.
\end{rem}

\section{Rigidity and currents with minimal singularities}\label{sec:minimal}

A good reference for currents with minimal singularities is \cite{Bou04}. Here we recall some of the properties we need in this paper. Let $\varphi_1$ and $\varphi_2$ be quasi-psh functions on a compact complex manifold $X$. Then we say that $\varphi_1$ is \emph{less singular} than $\varphi_2$, and write $\varphi_1\preceq\varphi_2$, if there exists a constant $C$ such that $\varphi_2\leq\varphi_1+C$. We denote by $\varphi_1\approx\varphi_2$ the induced equivalence relation, i.e.\ we say that $\varphi_1$ and $\varphi_2$ have \emph{equivalent singularities} if $\varphi_1\preceq\varphi_2\preceq\varphi_1$. 

Let $T_1$ and $T_2$ be two closed positive $(1,1)$-currents on $X$ in a fixed pseudoeffective class $\alpha\in H^{1,1}_\mathrm{BC}(X,\R)$ and let $\theta\in\alpha$ be a fixed smooth form. Then there exist $\theta$-psh functions $\varphi_1$ and $\varphi_2$ such that $T_1=\theta+dd^c\varphi_1$ and $T_2=\theta+dd^c\varphi_2$. We say that $T_1$ is \emph{less singular} than $T_2$, and write $T_1\preceq T_2$, if $\varphi_1\preceq\varphi_2$; and similarly for $T_1\approx T_2$. This does not depend on the choice of $\theta$, $\varphi_1$ and $\varphi_2$. Any two closed positive $(1,1)$-currents with equivalent singularities have the same Lelong numbers.

A compactness argument on the set of $\theta$-psh functions then shows that there exists a currents $T_{\min}\in\alpha$ such that $T_{\min}\preceq T$ for all $T\in\alpha$. We call $T_{\min}$ a \emph{positive current with minimal singularities in $\alpha$}. For each point $x\in X$ we have
\begin{equation}\label{eq:Tmin}
\nu(T_{\min},x)=\inf_{T\in\alpha}\nu(T,x).
\end{equation}

Part (a) of the following result shows that the relation $\preceq$ is compatible with pullbacks. Part (b), which is \cite[Proposition 1.12]{BEGZ}, is the consequence of (a) and of Proposition \ref{pro:pullback}.

\begin{prop}\label{pro:pullback_min_sing}
Let $f\colon Y\to X$ be a surjective morphism with connected fibres between compact complex manifolds and let $\alpha\in H^{1,1}_\mathrm{BC}(X,\R)$ be a pseudoeffective class. 
\begin{enumerate}[\normalfont (a)]
\item Let $T_1$ and $T_2$ be two $(1,1)$-currents in $\mathcal C_\alpha$. Then $T_1\preceq T_2$ if and only if $f^*T_1\preceq f^*T_2$.
\item Let $T\in\mathcal C_\alpha$. Then $T$ has minimal singularities if and only if the current $f^*T\in\mathcal C_{f^*\alpha}$ has minimal singularities.
\end{enumerate}
\end{prop}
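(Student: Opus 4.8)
The plan is to prove (a) directly by transporting the comparison relation $\preceq$ through local/global potentials, and then to deduce (b) formally from (a) together with the bijectivity of $f^*$ from Proposition \ref{pro:pullback}.

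For part (a), I would fix a smooth form $\theta\in\alpha$ and write $T_j=\theta+dd^c\varphi_j$ for $\theta$-psh functions $\varphi_j$, $j=1,2$. The key observation is that pullback commutes with $dd^c$ and carries the smooth form $\theta$ to the smooth form $f^*\theta\in f^*\alpha$, so that $f^*T_j=f^*\theta+dd^c(\varphi_j\circ f)$, where $\varphi_j\circ f$ is $f^*\theta$-psh since the pullback of a psh function by a holomorphic map is psh. Thus, by the very definition of $\preceq$, the relation $f^*T_1\preceq f^*T_2$ is literally the relation $\varphi_1\circ f\preceq\varphi_2\circ f$, and it suffices to establish the equivalence $\varphi_1\preceq\varphi_2\iff\varphi_1\circ f\preceq\varphi_2\circ f$.

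The forward direction is immediate: if $\varphi_2\leq\varphi_1+C$ on $X$ for some constant $C$, then precomposing with $f$ gives $\varphi_2\circ f\leq\varphi_1\circ f+C$ on $Y$. The reverse direction is where the surjectivity of $f$ enters: if $\varphi_2\circ f\leq\varphi_1\circ f+C$ on $Y$, then for any $x\in X$ I choose $y\in f^{-1}(x)$, which is nonempty as $f$ is surjective, and evaluate to obtain $\varphi_2(x)\leq\varphi_1(x)+C$; hence $\varphi_2\leq\varphi_1+C$ on $X$. This proves (a), and I note that (a) uses only surjectivity of $f$, not connectedness of the fibres.

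For part (b), recall that $T$ has minimal singularities in $\alpha$ exactly when $T\preceq S$ for every $S\in\mathcal C_\alpha$, and likewise for $f^*T$ in $f^*\alpha$. If $T$ has minimal singularities in $\alpha$, then any current in $\mathcal C_{f^*\alpha}$ is of the form $f^*S$ with $S\in\mathcal C_\alpha$ by the surjectivity part of Proposition \ref{pro:pullback}; since $T\preceq S$, part (a) yields $f^*T\preceq f^*S$, so $f^*T$ has minimal singularities. Conversely, if $f^*T$ has minimal singularities in $f^*\alpha$, then for every $S\in\mathcal C_\alpha$ we have $f^*S\in\mathcal C_{f^*\alpha}$ and hence $f^*T\preceq f^*S$, so part (a) gives $T\preceq S$ and $T$ has minimal singularities. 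The only verifications I would treat as routine are the identity $f^*T_j=f^*\theta+dd^c(\varphi_j\circ f)$ and the global $f^*\theta$-plurisubharmonicity of $\varphi_j\circ f$, both standard for pullback of currents along a surjective holomorphic map. The one genuinely non-formal input is the surjectivity of $f^*\colon\mathcal C_\alpha\to\mathcal C_{f^*\alpha}$ from Proposition \ref{pro:pullback}, invoked in the forward direction of (b) to guarantee that every positive current in the class $f^*\alpha$ descends to $X$; this is the step that relies on the fibres being connected, and I expect it to be the main conceptual point, everything else being elementary manipulation of potentials.
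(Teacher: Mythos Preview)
Your proof is correct and is essentially identical to the paper's own argument: part (a) is done by writing $T_j=\theta+dd^c\varphi_j$ and transporting the inequality between potentials along the surjection $f$, and part (b) follows formally from (a) together with the surjectivity of $f^*\colon\mathcal C_\alpha\to\mathcal C_{f^*\alpha}$ from Proposition~\ref{pro:pullback}. Your remark that only surjectivity of $f$ is needed for (a), while connectedness of fibres enters only via Proposition~\ref{pro:pullback} in the forward direction of (b), is also accurate.
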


\begin{proof}
We first show (a). Fix a smooth $(1,1)$-form $\theta\in\alpha$ and $\theta$-psh functions $\varphi_1$ and $\varphi_2$ on $X$ such that $T_1=\theta+dd^c\varphi_1$ and $T_2=\theta+dd^c\varphi_2$. If $T_1\preceq T_2$, then there exists a constant $C_1$ such that $\varphi_1\geq\varphi_2+C_1$, hence $f^*\varphi_1\geq f^*\varphi_2+C_1$. Therefore, $f^*T_1\preceq f^*T_2$. Conversely, if $f^*T_1\preceq f^*T_2$, then there exists a constant $C_2$ such that $f^*\varphi_1\geq f^*\varphi_2+C_2$, hence $\varphi_1\geq \varphi_2+C_2$ as $f$ is surjective. Thus, $T_1\preceq T_2$.

We now show (b). Assume that $T$ has minimal singularities and let $S_Y\in\mathcal C_{f^*\alpha}$. By Proposition \ref{pro:pullback} there exists a $(1,1)$-current $S_X\in\mathcal C_\alpha$ such that $S_Y=f^*S_X$. Then $T\preceq S_X$, hence $f^*T\preceq f^*S_X=S_Y$ by (a), and consequently $f^*T$ has minimal singularities. Conversely, assume that $f^*T$ has minimal singularities and let $S_X\in\mathcal C_\alpha$. Then $f^*T\preceq f^*S_X$, hence $T\preceq S_X$ by (a), and consequently $T$ has minimal singularities.
\end{proof}

Note that a current with minimal singularities in a pseudoeffective class is unique up to equivalence of singularities, but is in general far from being unique. The following result was shown to us by Tien-Cuong Dinh, and we are grateful to him for allowing us to include it here.

\begin{prop}\label{pro:infinitelymanyminimal}
Let $X$ be a compact complex manifold and let $\alpha$ be a pseudoeffective class in $H^{1,1}_\mathrm{BC}(X,\R)$. If $\alpha$ is not rigid, then there exist infinitely many currents with minimal singularities in $\alpha$.
\end{prop}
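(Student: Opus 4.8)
The plan is to exploit the convexity of the set $\mathcal C_\alpha$: since $\alpha$ is not rigid, there exist two distinct currents $T_0, T_1 \in \mathcal C_\alpha$, and for every $t \in [0,1]$ the convex combination
$$T_t := (1-t)T_0 + tT_1$$
is again a closed positive current in $\alpha$, giving an uncountable family inside $\mathcal C_\alpha$. The difficulty is that these $T_t$ need not have minimal singularities, so the bulk of the argument is to manufacture, from this one-parameter family, infinitely many genuinely \emph{distinct} currents with minimal singularities. First I would fix a reference current with minimal singularities $T_{\min} \in \alpha$ and a smooth form $\theta \in \alpha$, writing $T_0 = \theta + dd^c\varphi_0$, $T_1 = \theta + dd^c\varphi_1$ and $T_{\min} = \theta + dd^c\varphi_{\min}$ with $\varphi_{\min}$ less singular than all of $\varphi_0, \varphi_1$.

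The key construction is to take regularised maxima. For a parameter $t$, set
$$\varphi_t := \max\{\varphi_{\min} + c_t,\ (1-t)\varphi_0 + t\varphi_1\}$$
for a suitable constant $c_t$, or more cleanly work with $\psi_t := \max\{\varphi_{\min}, (1-t)\varphi_0 + t\varphi_1 + a\}$ where $a$ is chosen so that the second entry never dominates everywhere. Because $\varphi_{\min}$ and the convex combination are both $\theta$-psh, their maximum is $\theta$-psh, and crucially the maximum is \emph{less singular} than $\varphi_{\min}$; combined with the minimality of $\varphi_{\min}$ this forces $S_t := \theta + dd^c\psi_t$ to have minimal singularities. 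Thus each $S_t \in \mathcal C_\alpha$ has minimal singularities, and it remains to show that the assignment $t \mapsto S_t$ takes infinitely many distinct values. The main obstacle is precisely this injectivity-type statement: one must rule out the degenerate possibility that the maximum is governed entirely by $\varphi_{\min}$ for all $t$, which would collapse every $S_t$ to $T_{\min}$.

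To handle the obstacle I would argue that, since $T_0 \neq T_1$, we cannot have $\varphi_0 \approx \varphi_1 \approx \varphi_{\min}$ with identical currents; there is a point or region where the currents $T_0, T_1$ genuinely differ from each other and from $T_{\min}$. I would isolate a point $x_0$ and a small constant shift so that for a range of parameters $t$ the convex combination strictly exceeds $\varphi_{\min}$ on an open set, making $\psi_t$ depend nontrivially on $t$ there; then distinct values of $t$ in that range yield currents $S_t$ that differ as currents (e.g.\ their local potentials, hence the currents themselves, disagree on that open set). An alternative and perhaps cleaner route, avoiding a delicate pointwise analysis, is to note that if $\mathcal C_\alpha$ contained only finitely many currents with minimal singularities, then by Proposition \ref{pro:pullback} this finiteness would be stable under the constructions above while convexity produces a continuum of candidates; pushing this tension through gives the contradiction. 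Either way, once one exhibits two distinct minimal-singularity currents one can iterate the max-construction with the new pair, and the continuity of the family in $t$ upgrades ``at least two'' to ``infinitely many''. I expect the genuinely subtle point to be verifying that the regularised maxima produce currents that are simultaneously of minimal singularities \emph{and} pairwise distinct, since minimal singularities is an equivalence-class condition that is deliberately blind to the kind of differences that distinguish currents.
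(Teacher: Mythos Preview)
Your outline has the right architecture: take a minimal potential $\varphi_{\min}$, form maxima with other $\theta$-psh potentials to produce new currents that remain minimal, and then use convexity of $\mathcal C_\alpha$ to upgrade from ``two'' to ``infinitely many''. This is exactly the paper's strategy. The genuine gap is precisely the step you flag yourself but do not resolve: you never establish that any of your $S_t$ is actually \emph{different} from $T_{\min}$. Your suggestion to ``isolate a point $x_0$ and a small constant shift'' is not a proof, and the alternative route invoking Proposition~\ref{pro:pullback} is a non sequitur (that proposition concerns pullbacks along surjective maps and says nothing about finiteness of minimal currents).

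The paper closes this gap with a clean dichotomy that you should adopt. Pick any second current $S=\theta+dd^c\psi\in\alpha$ with $S\neq T_{\min}$. If $S$ already has minimal singularities, you are done by convexity. If not, set $\varphi_c:=\max\{\varphi_{\min},\,c+\psi\}$ and $T_c:=\theta+dd^c\varphi_c$; as you observe, $T_c$ is minimal. Now choose $c$ large enough that $c+\psi>\varphi_{\min}$ at some point (possible since both are bounded above on the compact $X$). Suppose $T_c=T_{\min}$. Then $\varphi_c=\varphi_{\min}+C$ for a constant $C$; since $\varphi_c\geq\varphi_{\min}$ with strict inequality somewhere, $C>0$, hence $\varphi_c>\varphi_{\min}$ \emph{everywhere}, which forces $\varphi_c=c+\psi$ by definition of the max. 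But then $\psi$ differs from the minimal potential $\varphi_c$ by a constant, so $S$ has minimal singularities, contradicting our case assumption. Thus $T_c\neq T_{\min}$, and convex combinations of these two distinct minimal currents yield infinitely many. Note that the paper uses a single shift parameter $c$ rather than your interpolation parameter $t$; introducing the family $(1-t)\varphi_0+t\varphi_1$ inside the max only obscures the argument and creates the injectivity problem you were unable to solve.
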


\begin{proof}
Let $T$ be a current with minimal singularities in $\alpha$. Since $\alpha$ is not rigid, there exists another closed positive current $S\in\alpha$. We may assume that $S$ is not a current with minimal singularities in $\alpha$, since otherwise for each $t\in[0,1]$ the current $tT+(1-t)S\in\alpha$ has minimal singularities, and the result follows. 

If we fix a smooth $(1,1)$-form $\theta\in\alpha$, then there exist quasi-psh functions $\varphi$ and $\psi$ on $X$ such that
$$T=\theta+dd^c\varphi\quad\text{and}\quad S=\theta+dd^c\psi.$$
For each real number $c$ set
$$\varphi_c:=\max\{\varphi,c+\psi\}\quad\text{and}\quad T_c:=\theta+dd^c\varphi_c.$$
Since $\varphi$ and $c+\psi$ are $\theta$-psh, then so is $\varphi_c$, and hence the closed $(1,1)$-current $T_c$ is positive and clearly $T_c\in\alpha$. As $\varphi\leq\varphi_c$, we have $T_c\preceq T$, hence $T_c$ is also a current with minimal singularities in $\alpha$, for each $c$.

Since $X$ is compact, $\varphi$ and $\psi$ are bounded from above, and we fix a constant $c\gg0$ such that $\varphi_c\neq\varphi$. Assume that $T_c=T$. Then by compactness of $X$ there exists a constant $C$ such that $\varphi_c=\varphi+C$. The choice of $c$ implies that $\varphi_c(x)>\varphi(x)$ for some $x\in X$, hence $C>0$ and consequently $\varphi_c>\varphi$ everywhere on $X$. Therefore, $\varphi_c=c+\psi$ by the definition of $\varphi_c$, and thus $\psi$ has minimal singularities. This contradicts the assumption on the current $S$ made in the first paragraph of the proof. Therefore, $T_c\neq T$, and we conclude as in the first paragraph.
\end{proof}

\begin{rem}\label{rem:rigidvsnumerical}
Let $X$ be a compact complex manifold and let $\alpha$ be a pseudoeffective class in $H^{1,1}_\mathrm{BC}(X,\R)$. Consider the set 
$$|\alpha|_\equiv:=\{[D]\in\alpha\mid D\geq0\text{ is an $\R$-divisor}\}.$$
Then clearly $|\alpha|_\equiv\subseteq\mathcal C_\alpha$. If $\alpha$ is not rigid, then the set $\mathcal C_\alpha$ is actually much bigger than $|\alpha|_\equiv$. We can see this as follows. By Proposition \ref{pro:infinitelymanyminimal} there are infinitely many currents with minimal singularities in $\alpha$. However, \emph{at most one of them} belongs to $|\alpha|_\equiv$: indeed, assume that there exist two distinct effective $\R$-divisors $D_1$ and $D_2$ such that both $[D_1]$ and $[D_2]$ are currents with minimal singularities in $\mathcal C_\alpha$. Then there exists a prime divisor $\Gamma$ on $X$ such that $\mult_\Gamma D_1\neq\mult_\Gamma D_2$. But then $\nu(D_1,\Gamma)\neq\nu(D_2,\Gamma)$ by Thie's theorem, which contradicts \eqref{eq:Tmin}.
\end{rem}

The following lemma is crucial in this paper and it connects non-rigidity with properties of currents with minimal singularities.

\begin{lem}\label{lem:not_rigid}
Let $X$ be a compact complex manifold and let $G\geq0$ be an $\R$-divisor on $X$ such that the class $\{G\}$ is not rigid. Let $G_{\min}$ be a current with minimal singularities in $\{G\}$. Then:
\begin{enumerate}[\normalfont (a)]
\item for each component $\Gamma$ of $G$ we have $\nu(G_{\min},\Gamma)\leq \mult_\Gamma G$,
\item there exists a component $\Gamma$ of $G$ such that $\nu(G_{\min},\Gamma)=0$.
\end{enumerate}
\end{lem}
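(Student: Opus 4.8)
The plan is to exploit the characterisation \eqref{eq:Tmin} of Lelong numbers of a current with minimal singularities together with the existence of a \emph{second} closed positive current in the class $\{G\}$, which non-rigidity provides.

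For part (a), I would argue that $G_{\min}\preceq[G]$, since $[G]\in\{G\}$ is a closed positive current and $G_{\min}$ has minimal singularities. Currents comparable under $\preceq$ satisfy $\nu(G_{\min},x)\le\nu([G],x)$ at every point $x$ (more precisely, by \eqref{eq:Tmin} one has $\nu(G_{\min},x)=\inf_{T\in\{G\}}\nu(T,x)\le\nu([G],x)$ directly). For a component $\Gamma$ of $G$, Thie's theorem gives $\nu([G],\Gamma)=\mult_\Gamma G$, so taking the infimum over $x\in\Gamma$ in \eqref{eq:Tmin} yields $\nu(G_{\min},\Gamma)\le\mult_\Gamma G$, which is (a). This part is routine.

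For part (b) — the genuine content of the lemma — I would proceed by contradiction and suppose $\nu(G_{\min},\Gamma)>0$ for \emph{every} component $\Gamma$ of $G$. The idea is that the divisorial part of the Siu decomposition of $G_{\min}$ must then be supported on all of $\Supp G$, and combined with (a) this traps $G_{\min}$ too tightly to its divisorial part, forcing rigidity. Concretely, write the Siu decomposition $G_{\min}=\sum_\Gamma\nu(G_{\min},\Gamma)\,[\Gamma]+R_{\min}$ with $R_{\min}\ge0$ having zero generic Lelong number along every prime divisor. Since non-rigidity gives a second current $S\in\{G\}$ with $S\ne G_{\min}$, I would compare Lelong numbers: by \eqref{eq:Tmin}, $\nu(G_{\min},\Gamma)\le\nu(S,\Gamma)$ for every component, so $S$ also has strictly positive generic Lelong number along every component of $G$, and its Siu decomposition $S=\sum_\Gamma\nu(S,\Gamma)[\Gamma]+R_S$ has a divisorial part supported on $\Supp G$. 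The key mechanism I expect to invoke is minimality: because $G_{\min}$ is minimal and each $\nu(G_{\min},\Gamma)$ equals the infimal Lelong number, subtracting off the common divisorial contribution should reduce the problem to comparing the residual currents. The aim is to show that the hypothesis ``$\nu(G_{\min},\Gamma)>0$ for all $\Gamma$'' forces the residual parts to live in a class that is itself rigid (by an exceptional-divisor or zero-class argument in the spirit of Example \ref{exa:zero} and Corollary \ref{cor:Tplusexceptional}), which then propagates back to rigidity of $\{G\}$ and contradicts the hypothesis.

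The main obstacle will be making the last reduction rigorous: one must convert ``every component carries positive generic Lelong number'' into an actual upper bound that pins down $G_{\min}$ uniquely. The cleanest route I foresee is to combine (a) with the inequality $\nu(G_{\min},\Gamma)\le\nu(S,\Gamma)\le\mult_\Gamma G$ applied to a suitable second current, and to show that if \emph{no} component had vanishing minimal Lelong number, then one could write $G_{\min}=[G']+R$ for an effective divisor $G'$ with $\Supp G'=\Supp G$ whose residual class is rigid, whence $\{G\}$ would be rigid — the contradiction. The delicate point is controlling the residual current $R_{\min}$ and ensuring the subtraction keeps everything positive; here I would lean on the Siu decomposition and on \eqref{eq:Tmin} to guarantee that the difference $G_{\min}-S$, or the difference of divisorial parts, behaves as required.
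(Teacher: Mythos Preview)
Your argument for (a) is correct and matches the paper's.

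For (b) there is a genuine gap. Your contradiction hypothesis is ``$\nu(G_{\min},\Gamma)>0$ for every component $\Gamma$'', i.e.\ writing the Siu decomposition $G_{\min}=\sum_i\delta_i[\Gamma_i]+R$ and $G=\sum_i\gamma_i\Gamma_i$, you assume all $\delta_i>0$. From this you want to conclude that the residual class $\{R\}=\{G-\sum_i\delta_i\Gamma_i\}$ is rigid and then propagate rigidity back to $\{G\}$. But $\delta_i>0$ for all $i$ is strictly weaker than $\delta_i=\gamma_i$ for all $i$; in general $\{R\}$ is the class of a nonzero effective divisor with the same support as $G$, and there is no mechanism (no exceptionality, no zero class) forcing it to be rigid. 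Your sketch never produces such a mechanism, and indeed none exists at that level of generality. So the route ``positive Lelong numbers $\Rightarrow$ residual class rigid $\Rightarrow$ $\{G\}$ rigid'' does not close.

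What the paper does instead is \emph{constructive}, not a contradiction on rigidity. First it shows the divisorial part $D=\sum_i\delta_i\Gamma_i$ satisfies $D\neq G$: if $D=G$, then for any $T\in\{G\}$ its divisorial part dominates $D=G$, so $T-[G]\geq0$ lies in the zero class and hence vanishes by Example~\ref{exa:zero}, contradicting non-rigidity. Now with $D\leq G$ and $D\neq G$, one has $R\equiv\sum_i(\gamma_i-\delta_i)\Gamma_i$ with some $\gamma_j-\delta_j>0$. Choose the largest $k>0$ with $k(\gamma_i-\delta_i)\leq\gamma_i$ for all $i$; say equality holds at $i=1$. Then
\[
\Theta:=kR+\sum_i\bigl(\gamma_i-k(\gamma_i-\delta_i)\bigr)[\Gamma_i]
\]
is a closed positive current in $\{G\}$ with $\nu(\Theta,\Gamma_1)=0$, whence $\nu(G_{\min},\Gamma_1)=0$ by \eqref{eq:Tmin}. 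The missing idea in your sketch is precisely this scaling construction (equivalently: subtract the largest multiple $c[G]$ from $G_{\min}$ that keeps the result positive and renormalise), which manufactures a witness to a vanishing Lelong number rather than trying to squeeze rigidity out of positivity alone.
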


\begin{proof}
Part (a) follows immediately from the definition of currents with minimal singularities.

For the remainder of the proof we show (b). Let 
\begin{equation}\label{eq:Siumin}
G_{\min}=R+D
\end{equation}
be the Siu decomposition of $G_{\min}$. By (a) we know that
\begin{equation}\label{eq:87}
D\leq G,
\end{equation}
and we first claim that
\begin{equation}\label{eq:claim}
D\neq G.
\end{equation}
Indeed, assume that $D=G$. Pick any $T\in\mathcal C_{\{G\}}$, and let $D_T$ be the divisorial part of the Siu decomposition of $T$. By construction and by \eqref{eq:Tmin} we have 
$$T\geq D_T\geq D=G,$$
and since $T\equiv G$ by assumption, we have that $T-G$ is a closed positive $(1,1)$-current such that $T-G\equiv 0$. Thus $T-G=0$ by Example \ref{exa:zero}, hence $\{G\}$ is a rigid current, a contradiction which shows \eqref{eq:claim}.

By \eqref{eq:87} we may write
$$ G=\sum_{i=1}^r\gamma_i \Gamma_i\quad\text{and}\quad D = \sum_{i=1}^r\delta_i \Gamma_i $$
with $\gamma_i>0$ and $\gamma_i\geq\delta_i\geq0$. Thus, since $G_{\min}\equiv G$, by \eqref{eq:Siumin} we have
\begin{equation}\label{eq:1}
R\equiv \sum_{i=1}^r(\gamma_i-\delta_i) \Gamma_i,
\end{equation}
where clearly
\begin{equation}\label{eq:88}
0\leq\gamma_i-\delta_i\leq\gamma_i\quad\text{for all }i=1,\dots,r. 
\end{equation}
Moreover, by \eqref{eq:claim} there exists an index $1\leq j\leq r$ such that $\gamma_j-\delta_j>0$. This and \eqref{eq:88} show that
$$k:=\max\{t\in\R\mid t(\gamma_i-\delta_i)\leq\gamma_i\text{ for all }i=1,\dots,r\}$$
is a well-defined positive real number. Without loss of generality, we may assume that
\begin{equation}\label{eq:89}
k(\gamma_1-\delta_1)=\gamma_1.
\end{equation}
Set 
$$\Theta:=kR+\sum_{i=1}^r\big(\gamma_i-k(\gamma_i-\delta_i)\big)\Gamma_i.$$
Then $\Theta$ is the sum of two positive closed $(1,1)$-currents, hence is itself a closed positive $(1,1)$-current. We have
\begin{equation}\label{eq:89a}
\nu(\Theta,\Gamma_1)=k\,\nu(R,\Gamma_1)+\big(\gamma_1-k(\gamma_1-\delta_1)\big)=0
\end{equation}
by \eqref{eq:89}.  Moreover, by \eqref{eq:1} we have
$$\Theta\equiv k\sum_{i=1}^r(\gamma_i-\delta_i) \Gamma_i+\sum_{i=1}^r\big(\gamma_i-k(\gamma_i-\delta_i)\big)\Gamma_i=G,$$
thus $\Theta\in\{G\}$. By \eqref{eq:Tmin} and \eqref{eq:89a} this implies 
$$0\leq\nu(G_{\min},\Gamma_1)\leq\nu(\Theta,\Gamma_1)=0,$$
which yields the result.
\end{proof}

\section{Weak rigidity}

In this section we introduce a weaker rigidity property alluded to in Theorems \ref{thm:main1} and \ref{thm:main2}.

\begin{dfn}
    Let $X$ be a compact complex manifold and let $\Gamma\geq0$ be an $\R$-divisor on $X$. We say that $\{\Gamma\}$ is \emph{weakly rigid} if the class of each component of $\Gamma$ is rigid.
\end{dfn}

\begin{rem}\label{rem:rigid_weakrigid}
    Let $X$ be a compact complex manifold and let $G\geq0$ be an $\R$-divisor on $X$. If $\{G\}$ is rigid, then it is weakly rigid by Remark \ref{rem:smallercurrent}. The converse is, however, not true in general. Indeed, consider the birational map as in Remark \ref{rem:pushforward}. Then each component of $\pi^*G$ can be contracted to a smooth projective surface by Castelnuovo's criterion, hence $\{\pi^*G\}$ is weakly rigid by Example \ref{exa:exceptional}. But by Corollary \ref{cor:pullback_rigid}, $\{\pi^*G\}$ is not rigid as $\{G\}$ is not rigid.
\end{rem}

We have the following easy consequence of Corollary \ref{cor:pullback_rigid}.

\begin{cor}\label{cor:pullback_weaklyrigid}
    Let $\pi\colon Y\to X$ be a surjective morphism with connected fibres between compact complex manifolds and let $G$ be an effective $\R$-divisor on $X$. If $\{G\}$ is weakly rigid, then $\{\pi^*G\}$ is weakly rigid.
\end{cor}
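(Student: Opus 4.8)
The plan is to reduce the weak rigidity of $\{\pi^*G\}$ to the rigidity of the pullbacks of the individual components of $G$, and then to combine the pullback result for rigidity (Corollary \ref{cor:pullback_rigid}) with the downward monotonicity of rigidity (Remark \ref{rem:smallercurrent}). The whole argument is essentially formal once these two ingredients are in place.

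First I would write $G=\sum_{i=1}^r\gamma_i\Gamma_i$ with $\gamma_i>0$ and $\Gamma_i$ distinct prime divisors, so that by the weak rigidity hypothesis each class $\{\Gamma_i\}$ is rigid. Since $\pi$ is a morphism, each $\pi^*\Gamma_i$ is an effective divisor, and the support of $\pi^*G=\sum_i\gamma_i\pi^*\Gamma_i$ is the union of the supports of the $\pi^*\Gamma_i$. Consequently every component of $\pi^*G$ is a component of $\pi^*\Gamma_i$ for some $i$, so it suffices to prove that each component of each $\pi^*\Gamma_i$ is rigid; this is the reduction step.

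Next, for a fixed $i$, I would invoke Corollary \ref{cor:pullback_rigid}: since $\pi$ is surjective with connected fibres and $\{\Gamma_i\}$ is rigid, the pullback class $\{\pi^*\Gamma_i\}$ is rigid, i.e.\ the current $[\pi^*\Gamma_i]$ is rigid. Writing $\pi^*\Gamma_i=\sum_j m_j\Gamma_j'$ with the $m_j$ positive integers and the $\Gamma_j'$ distinct prime divisors, I observe that for each $j$ one has $0\leq[\Gamma_j']\leq[\pi^*\Gamma_i]$, the second inequality holding because $m_j\geq 1$ forces $[\pi^*\Gamma_i]-[\Gamma_j']=(m_j-1)[\Gamma_j']+\sum_{l\neq j}m_l[\Gamma_l']\geq 0$. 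Applying Remark \ref{rem:smallercurrent} to the rigid current $[\pi^*\Gamma_i]$ then shows that $[\Gamma_j']$ is rigid, that is, $\{\Gamma_j'\}$ is rigid. As $i$ and $j$ were arbitrary, every component of $\pi^*G$ is rigid, which is exactly the assertion that $\{\pi^*G\}$ is weakly rigid.

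I do not expect a serious obstacle here, as the statement is a direct consequence of results already established. The only point requiring a little care is the bookkeeping in the reduction step: one must check that every component of $\pi^*G$ genuinely appears as a component of some $\pi^*\Gamma_i$, and that the multiplicities $m_j$ are at least $1$ so that $[\Gamma_j']$ is indeed dominated by $[\pi^*\Gamma_i]$. Both facts follow at once from the fact that $\pi$ is a morphism between manifolds, so that divisors pull back with positive integral multiplicities, and no further input is needed.
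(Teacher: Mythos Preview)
Your proof is correct and follows essentially the same route as the paper: both reduce to the rigidity of each $\{\pi^*\Gamma_i\}$ via Corollary~\ref{cor:pullback_rigid}, and then pass to the components. The only cosmetic difference is that the paper packages the passage from ``$\{\pi^*\Gamma_i\}$ rigid'' to ``each component of $\pi^*\Gamma_i$ rigid'' as a citation of Remark~\ref{rem:rigid_weakrigid}, whereas you unwind that remark by hand using Remark~\ref{rem:smallercurrent} and the integrality of the pullback multiplicities.
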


\begin{proof}
Assume that $\{G\}$ is weakly rigid. If we write $G=\sum_{i=1}^k \gamma_i \Gamma_i$ as a sum of its components, then all classes $\{\Gamma_i\}$ are rigid. By Corollary \ref{cor:pullback_rigid}, each class $\{\pi^*\Gamma_i\}$ is rigid, hence weakly rigid by Remark \ref{rem:rigid_weakrigid}. Therefore, the class $\{\pi^*G\}= \sum_{i=1}^k \gamma_i \{\pi^*\Gamma_i\}$ is weakly rigid.   
\end{proof}

\begin{rem}
    The converse of Corollary \ref{cor:pullback_weaklyrigid} is not true in general. Indeed, with notation as in Remark \ref{rem:rigid_weakrigid}, the class $\{\pi^*G\}$ is weakly rigid, but $\{G\}$ is not rigid, hence not weakly rigid by Remark \ref{rem:rigid_weakrigid}.
\end{rem}

\begin{exa}
Let $D$ be an effective $\R$-divisor on a complex projective surface $S$. Then there always exists a birational morphism $f\colon S'\to S$ such that $\{f^*D\}$ is weakly rigid. Indeed, by passing to a resolution we may assume that $S$ is smooth. If $\Gamma$ is a prime divisor on $S$, and if $\Gamma'$ is the strict transform of $\Gamma$ on the blowup of $S$ at a point lying on $\Gamma$, then $(\Gamma')^2=\Gamma^2-1$. Therefore, if $f$ is obtained by blowing up sufficiently many points on the components of $D$, then we may assume that each component of $f^*D$ has negative self-intersection. Then the statement follows from \cite[Proposition 3.13 and Theorem 4.5]{Bou04}. 
\end{exa}

The following lemma will be crucial in the proofs of Theorems \ref{thm:main1} and \ref{thm:main2}.

\begin{lem}\label{lem:lelong_num_notwrigid}
Let $X$ be a compact complex manifold and let $G\geq0$ be an $\R$-divisor on $X$. Assume that the class $\{G\}$ is not weakly rigid and let $\Gamma$ be an irreducible component of $G$ such that $\{\Gamma\}$ is not rigid. Let $G_{\min}\in\{G\}$ and $\Gamma_{\min}\in\{\Gamma\}$ be currents with minimal singularities. Then
$$ \nu(\Gamma_{\min},\Gamma)=0 \quad \text{ and } \quad \nu(G_{\min},\Gamma)=0. $$
\end{lem}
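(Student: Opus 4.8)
The plan is to reduce the second equality to the first, and to prove the first by applying Lemma \ref{lem:not_rigid}(b) to the single prime divisor $\Gamma$. Since $\Gamma$ is a prime divisor and $\{\Gamma\}$ is not rigid, the effective $\R$-divisor $\Gamma$ (with its single component) satisfies the hypotheses of Lemma \ref{lem:not_rigid}. That lemma then produces a component of $\Gamma$ along which the minimal-singularities current has vanishing generic Lelong number; but $\Gamma$ is irreducible, so its only component is $\Gamma$ itself. Hence $\nu(\Gamma_{\min},\Gamma)=0$, which is the first claimed equality.

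For the second equality, the idea is to compare $G_{\min}$ with a current built from $\Gamma_{\min}$. Write $G=\gamma\,\Gamma+G'$, where $\gamma=\mult_\Gamma G>0$ and $G'\geq0$ collects the remaining components, so that $\Gamma$ is not a component of $G'$. Then $\gamma\,\Gamma_{\min}+[G']$ is a closed positive $(1,1)$-current in the class $\gamma\{\Gamma\}+\{G'\}=\{G\}$, so by the characterisation \eqref{eq:Tmin} of minimal singularities we have
$$\nu(G_{\min},\Gamma)\leq\nu\big(\gamma\,\Gamma_{\min}+[G'],\Gamma\big)=\gamma\,\nu(\Gamma_{\min},\Gamma)+\nu([G'],\Gamma).$$
The first term on the right vanishes by the first equality, and the second vanishes because $\Gamma$ is not a component of $G'$, so $\nu([G'],\Gamma)=0$. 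Combined with $\nu(G_{\min},\Gamma)\geq0$, this gives $\nu(G_{\min},\Gamma)=0$.

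The only delicate point is the direct application of Lemma \ref{lem:not_rigid}(b) to the reducible-looking but actually irreducible divisor $\Gamma$; one must check that a prime divisor is a legitimate instance of the $\R$-divisor $G$ in that lemma, so that its conclusion (existence of a component with vanishing Lelong number) forces the statement for $\Gamma$ itself. This is immediate since $\Gamma\geq0$ is an $\R$-divisor with exactly one component and $\{\Gamma\}$ is assumed not rigid. I expect no genuine obstacle here; the main content is simply the bookkeeping that isolates $\Gamma$ from the other components of $G$ in the inequality above, and the elementary fact that the generic Lelong number of $[G']$ along a divisor not appearing in $G'$ is zero.
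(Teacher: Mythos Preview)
Your proposal is correct and follows essentially the same argument as the paper: first deduce $\nu(\Gamma_{\min},\Gamma)=0$ from Lemma~\ref{lem:not_rigid} applied to the prime divisor $\Gamma$, then write $G=\gamma\Gamma+G'$ and compare $G_{\min}$ with the test current $\gamma\,\Gamma_{\min}+[G']\in\{G\}$ via \eqref{eq:Tmin}. The only cosmetic difference is that the paper phrases the comparison as ``by the definition of currents with minimal singularities'' rather than citing \eqref{eq:Tmin} explicitly.
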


\begin{proof}
Since $\{\Gamma\}$ is not rigid, we have $\nu(\Gamma_{\min},\Gamma)=0$ by Lemma \ref{lem:not_rigid}. For the other statement, we may write 
$$ G = \gamma \Gamma + G', $$
where $\gamma>0$ and $G'$ is an effective $\R$-divisor on $X$ such that $\Gamma$ is not a component of $G'$. Note that $\gamma \Gamma_{\min} + G'\in\{G\}$, and thus, by the definition of currents with minimal singularities, we have
$$ 0\leq \nu(G_{\min},\Gamma) \leq \nu(\gamma \Gamma_{\min} + G',\Gamma) = \gamma\,\nu(\Gamma_{\min},\Gamma) + \nu(G',\Gamma) = 0, $$
which finishes the proof.
\end{proof}

\section{Proofs of the main results}\label{sec:pf}

Finally, in this section we prove the main results of the paper.

\begin{proof}[Proof of Theorem \ref{thm:main1}]
\emph{Step 1a.}
We may write
$$ K_Y+\Delta_Y \sim_\R f^*(K_X+\Delta) + E \sim_\R f^*D + E, $$
where $E\geq0$ is an $f$-exceptional $\R$-divisor having no common components with $\Delta_Y$. Then by Corollary \ref{cor:pullback_weaklyrigid} it suffices to show that the class $\{f^*D\}$ is weakly rigid, hence it suffices to show that $\{f^*D+E\}$ is weakly rigid. 

By replacing $(X,\Delta)$ by $(Y,\Delta_Y)$ and $D$ by $f^*D+E$, we may assume from the start that the pair $(X,\Delta+D)$ is log smooth. We will show that the class $\{D\}$ is weakly rigid.

\medskip

\emph{Step 1b.}
Fix $0<\varepsilon\ll1$ such that the pair $(X,\Delta+\varepsilon D)$ is klt. Then
$$ K_X+\Delta+\varepsilon D \sim_\R (1+\varepsilon)D. $$
By replacing $(X,\Delta)$ by $(X,\Delta+\varepsilon D)$ and $D$ by $(1+\varepsilon)D$, we may assume from the start that
\begin{equation}\label{eq:support}
\Supp D\subseteq\Supp \Delta.
\end{equation}

\medskip

\emph{Step 1c.}
Let $\Delta''$ be a $\Q$-divisor such that $\Delta''\geq\Delta$ and the pair $(X,\Delta'')$ is klt. Then
\begin{equation}\label{eq:9}
K_X+\Delta''\sim_\R D+(\Delta''-\Delta).
\end{equation}
If $\kappa(X,K_X+\Delta'')>0$, then $(X,\Delta'')$ has a good model by \cite[Theorem 1.3]{Laz24}, hence $(X,\Delta)$ has a good model by \cite[Theorem 1.4(b)]{Laz24}, in which case we conclude by Lemma \ref{lem:rigidMMP} and by Remark \ref{rem:rigid_weakrigid}.

Therefore, we may assume that $\kappa(X,K_X+\Delta'')=0$. Then there exists a $\Q$-divisor $D''\geq0$ such that $K_X+\Delta''\sim_\Q D''$. By \cite[Lemma 2.5(b)]{Laz24}, this and \eqref{eq:9} imply that $D''=D+(\Delta''-\Delta)$. Hence, it suffices to show that the divisor $D''$ is weakly rigid. By replacing $(X,\Delta)$ by $(X,\Delta'')$ and $D$ by $D''$, we may assume from the start that 
$$\Delta\text{ and }D\text{ are }\Q\text{-divisors}$$
and
\begin{equation}\label{eq3a}
\kappa(X,K_X+\Delta)=0.
\end{equation}

\medskip

\emph{Step 2.}
Suppose by contradiction that the class $\{D\}$ is not weakly rigid, and let $\Gamma$ be a component of $D$ such that $\{\Gamma\}$ is not rigid. Then $\Gamma$ is a component of $\Delta$ by \eqref{eq:support}, hence we may write
$$ \Delta = \gamma \Gamma + \Delta', $$
where $ \Gamma $ is not a component of $\Delta'$ and $0<\gamma<1$. Set $D':=D + (1-\gamma) \Gamma$, and observe that $D'$ is not weakly rigid and
\begin{equation}\label{eq1}
K_X + \Gamma + \Delta' \sim_\Q D'.
\end{equation}
The pair $(X,\Gamma+\Delta')$ is plt, and by \eqref{eq:support} we have
\begin{equation}\label{eq:inclusions}
\Gamma \subseteq \Supp D'\subseteq \Supp(\Gamma+\Delta').
\end{equation}
Moreover, since $\Supp D'=\Supp D$, by \cite[Lemma 2.9]{DL15} and by \eqref{eq3a} and \eqref{eq1} we have
\begin{equation}\label{eq3}
\kappa(X,K_X+\Gamma+\Delta')=0.
\end{equation}

Let $D_{\min}\in\{D\}$ and $\Gamma_{\min}\in\{\Gamma\}$ be currents with minimal singularities. Note that $D_{\min}+(1-\gamma) \Gamma_{\min}\in\{K_X + \Gamma + \Delta'\}$ by \eqref{eq1}, and thus, by Lemma \ref{lem:lelong_num_notwrigid} we have
$$0\leq\nu\big(D_{\min}+(1-\gamma) \Gamma_{\min}, \Gamma\big) = \nu(D_{\min},\Gamma)+(1-\gamma) \nu(\Gamma_{\min}, \Gamma) =0.$$
Therefore, by Lemma \ref{lem:sigmanu} we infer
\begin{equation}\label{eq2}
\sigma_\Gamma(K_X + \Gamma + \Delta')=0.
\end{equation}

\medskip

\emph{Step 3.}
From \eqref{eq3} we deduce that, in particular, the pair $(X,\Gamma+\Delta')$ has an NQC weak Zariski decomposition, see \cite[\S2.3]{LT22}. Therefore, by our assumption in dimension $n-1$ and by \cite[Theorem F]{LT22} we may run a $(K_X + \Gamma + \Delta')$-MMP which terminates with a minimal model $ (Z,\Gamma_Z+ \Delta'_Z)$, where $\Gamma_Z$ and $\Delta'_Z$ are the strict transforms of $\Gamma$ and $\Delta'$, respectively. By \eqref{eq2} and by Lemma \ref{lem:sigmaMMP} we have that $\Gamma_Z\neq0$. Note that 
$$\text{the pair }(Z,\Gamma_Z+ \Delta'_Z)\text{ is plt},$$
and we have 
\begin{equation}\label{eq:4}
\kappa(Z,K_Z+\Gamma_Z+ \Delta'_Z)=0
\end{equation}
by \eqref{eq3}.

Let $D'_Z$ be the strict transform of $D'$ on $Z$. Then \eqref{eq1} gives
\begin{equation}\label{eq:2}
K_Z + \Gamma_Z + \Delta'_Z \sim_\Q D'_Z,
\end{equation}
and by \eqref{eq:inclusions} we have
\begin{equation}\label{eq:3}
\Gamma_Z \subseteq \Supp D'_Z \subseteq \Supp(\Gamma_Z + \Delta'_Z).
\end{equation}
Our assumption in dimension $n-1$ implies the Nonvanishing conjecture in dimension $n-1$, hence by \cite[Corollary 1.8]{DHP13} we obtain that $\Gamma_Z$ does not belong to the stable base locus of $K_Z + \Gamma_Z + \Delta'_Z$, which is equal to $\Supp D_Z'$ by \eqref{eq:4} and \eqref{eq:2}. This contradicts \eqref{eq:3} and concludes the proof.
\end{proof}

\begin{proof}[Proof of Theorem \ref{thm:main2}]
As in Step 1a of the proof of Theorem \ref{thm:main1}, we may assume that the pair $(X,\Delta+D)$ is log smooth. In particular, the pair $(X,\Delta)$ is then dlt. We need to show that the class $\{D\}$ is weakly rigid.

If the $\R$-divisor $K_X+\Delta-\varepsilon\lfloor\Delta\rfloor$ is not pseudoeffective for each positive real number $\varepsilon$, then the pair $(X,\Delta)$ has a good model by \cite[Theorem 3.1(a)]{Laz23}, and thus $D$ is even rigid by Lemma \ref{lem:rigidMMP}.

Thus, we may assume that there exists $\delta'>0$ such that the $\R$-divisor $K_X+\Delta-\delta'\lfloor\Delta\rfloor$ is pseudoeffective. Since we assume the Nonvanishing conjecture in dimension $n$, there exists an effective $\R$-divisor $D_{\delta'}$ such that 
$$K_X+\Delta-\delta'\lfloor\Delta\rfloor\sim_\R D_{\delta'}.$$
Pick a real number $0<\delta<\delta'$. Then
\begin{equation}\label{eq:5}
K_X+\Delta-\delta\lfloor\Delta\rfloor\sim_\R D_{\delta'}+(\delta'-\delta)\lfloor\Delta\rfloor
\end{equation}
and
\begin{equation}\label{eq:6}
K_X+\Delta\sim_\R D_{\delta'}+\delta'\lfloor\Delta\rfloor.
\end{equation}
Since $\kappa_\iota(X,K_X+\Delta )=0$ and $K_X+\Delta\sim_\R D$, this together with \eqref{eq:6} implies by \cite[Lemma 2.5(b)]{Laz24} that
$$D=D_{\delta'}+\delta'\lfloor\Delta\rfloor.$$
This, in particular, yields
\begin{equation}\label{eq:9a}
\Supp D=\Supp(D_{\delta'}+(\delta'-\delta)\lfloor\Delta\rfloor),
\end{equation}
hence
$$\kappa_\iota(X,K_X+\Delta-\delta\lfloor\Delta\rfloor)=0$$
by \eqref{eq:5} and by \cite[Remark 2.4(b)]{Laz24}. Since the pair $(X,\Delta-\delta\lfloor\Delta\rfloor)$ is klt, we conclude that the class of the $\R$-divisor $D_{\delta'}+(\delta'-\delta)\lfloor\Delta\rfloor$ is weakly rigid by \eqref{eq:5} and by Theorem \ref{thm:main1}. Therefore, the class $\{D\}$ is weakly rigid by \eqref{eq:9a}.
\end{proof}

	\bibliographystyle{amsalpha}
	\bibliography{biblio}

\providecommand{\bysame}{\leavevmode\hbox to3em{\hrulefill}\thinspace}
\providecommand{\MR}{\relax\ifhmode\unskip\space\fi MR }
\providecommand{\MRhref}[2]{%
  \href{http://www.ams.org/mathscinet-getitem?mr=#1}{#2}
}
\providecommand{\href}[2]{#2}
\begin{thebibliography}{BEGZ10}

\bibitem[BEGZ10]{BEGZ}
S.~Boucksom, P.~Eyssidieux, V.~Guedj, and A.~Zeriahi, \emph{Monge-{A}mp\`ere
  equations in big cohomology classes}, Acta Math. \textbf{205} (2010), no.~2,
  199--262.

\bibitem[BM19]{BM19}
D.~Barlet and J.~Magn\'{u}sson, \emph{Complex analytic cycles {I}}, Grundlehren
  der mathematischen Wissenschaften, vol. 356, Springer, Cham; Soci\'{e}t\'{e}
  Math\'{e}matique de France, Paris, 2019.

\bibitem[Bou02]{Bou02}
S.~Boucksom, \emph{{C\^ones positifs des vari\'et\'es complexes compactes}},
  PhD Thesis, Universit\'e Joseph Fourier Grenoble, 2002, available at
  \url{https://theses.hal.science/tel-00002268}\setbox0=\hbox{2002}.

\bibitem[Bou04]{Bou04}
\bysame, \emph{Divisorial {Z}ariski decompositions on compact complex
  manifolds}, Ann. Sci. \'Ecole Norm. Sup. (4) \textbf{37} (2004), no.~1,
  45--76.

\bibitem[Can01]{Can01}
S.~Cantat, \emph{Dynamique des automorphismes des surfaces {$K3$}}, Acta Math.
  \textbf{187} (2001), no.~1, 1--57.

\bibitem[Dem12]{Dem12}
J.-P. Demailly, \emph{{Complex Analytic and Differential Geometry}}, available
  at
  \url{https://www-fourier.ujf-grenoble.fr/\~demailly/manuscripts/agbook.pdf}\setbox0=\hbox{2012}.

\bibitem[DHP13]{DHP13}
J.-P. Demailly, C.~D. Hacon, and M.~P{\u{a}}un, \emph{Extension theorems,
  non-vanishing and the existence of good minimal models}, Acta Math.
  \textbf{210} (2013), no.~2, 203--259.

\bibitem[DL15]{DL15}
T.~Dorsch and V.~Lazi\'c, \emph{A note on the abundance conjecture}, Algebraic
  Geometry \textbf{2} (2015), no.~4, 476--488.

\bibitem[DPS94]{DPS94}
J.-P. Demailly, Th. Peternell, and M.~Schneider, \emph{Compact complex
  manifolds with numerically effective tangent bundles}, J. Algebraic Geom.
  \textbf{3} (1994), no.~2, 295--345.

\bibitem[DS10]{DS10}
T.-C. Dinh and N.~Sibony, \emph{Super-potentials for currents on compact
  {K}\"{a}hler manifolds and dynamics of automorphisms}, J. Algebraic Geom.
  \textbf{19} (2010), no.~3, 473--529.

\bibitem[FT23]{FT23}
S.~Filip and V.~Tosatti, \emph{Canonical currents and heights for {K}3
  surfaces}, Camb. J. Math. \textbf{11} (2023), no.~3, 699--794.

\bibitem[Fuj17]{Fuj17}
O.~Fujino, \emph{Foundations of the minimal model program}, MSJ Memoirs,
  vol.~35, Mathematical Society of Japan, Tokyo, 2017.

\bibitem[Har77]{Har77}
R.~Hartshorne, \emph{Algebraic geometry}, Graduate Texts in Mathematics,
  vol.~52, Springer-Verlag, New York, 1977.

\bibitem[KM98]{KM98}
J.~Koll{\'a}r and S.~Mori, \emph{Birational geometry of algebraic varieties},
  Cambridge Tracts in Mathematics, vol. 134, Cambridge University Press,
  Cambridge, 1998.

\bibitem[Laz23]{Laz23}
V.~Lazi\'c, \emph{{Abundance for uniruled pairs which are not rationally
  connected}}, Enseign. Math. (2023), \url{https://doi.org/10.4171/LEM/1065}.

\bibitem[Laz24]{Laz24}
\bysame, \emph{{A few remarks on effectivity and good minimal models}},
  arXiv:2401.14190\setbox0=\hbox{2024}.

\bibitem[Leh13]{Leh13}
B.~Lehmann, \emph{Comparing numerical dimensions}, Algebra Number Theory
  \textbf{7} (2013), no.~5, 1065--1100.

\bibitem[LT22]{LT22}
V.~Lazi\'{c} and N.~Tsakanikas, \emph{On the existence of minimal models for
  log canonical pairs}, Publ. Res. Inst. Math. Sci. \textbf{58} (2022), no.~2,
  311--339.

\bibitem[LX23]{LX23}
V.~Lazi\'{c} and Z.~Xie, \emph{{Nakayama--Zariski decomposition and the
  termination of flips}}, arXiv:2305.01752\setbox0=\hbox{2023}.

\bibitem[Nak04]{Nak04}
N.~Nakayama, \emph{Zariski-decomposition and abundance}, MSJ Memoirs, vol.~14,
  Mathematical Society of Japan, Tokyo, 2004.

\bibitem[Siu74]{Siu74}
Y.~T. Siu, \emph{Analyticity of sets associated to {L}elong numbers and the
  extension of closed positive currents}, Invent. Math. \textbf{27} (1974),
  53--156.

\bibitem[SSV23]{SSV23}
N.~Sibony, A.~Soldatenkov, and M.~Verbitsky, \emph{{Rigid currents on compact
  hyperk\"ahler manifolds}}, arXiv:2303.11362\setbox0=\hbox{2023}.

\bibitem[Tom08]{Tom08}
M.~Toma, \emph{{On the K\"ahler rank of compact complex surfaces}}, Bull. Soc.
  Math. France \textbf{136} (2008), no.~2, 243--260.

\end{thebibliography}
	
\end{document}